\providecommand{\U}[1]{\protect\rule{.1in}{.1in}}
\newtheorem{theorem}{Theorem}
\newtheorem{corollary}[theorem]{Corollary}
\newtheorem{lemma}[theorem]{Lemma}
\newtheorem{remark}[theorem]{Remark}
\newenvironment{proof}[1][Proof]{\noindent\textbf{#1.} }{\ \rule{0.5em}{0.5em}}
\begin{document}

\title{$\mathbb{R}$-orbit Reflexive Operators}
\author{Don Hadwin\\University of New Hampshire
\and Ileana Iona\c{s}cu\\Philadelphia University
\and Hassan Yousefi\\California State University Fullerton}
\maketitle

\begin{abstract}
We completely characterize orbit reflexivity and $\mathbb{R}$-orbit
reflexivity for matrices in $\mathcal{M}_{N}\left(  \mathbb{R}\right)  $.
Unlike the complex case in which every matrix is orbit reflexive and
$\mathbb{C}$-orbit reflexivity is characterized solely in terms of the Jordan
form, the orbit reflexivity and $\mathbb{R}$-orbit reflexivity of a matrix in
$\mathcal{M}_{N}\left(  \mathbb{R}\right)  $ is described in terms of the
linear dependence over $\mathbb{Q}$ of certain elements of $\mathbb{R}%
/\mathbb{Q}$. We also show that every $n\times n$ matrix over an uncountable
field $\mathbb{F}$ is algebraically $\mathbb{F}$-orbit reflexive.

\end{abstract}

\section{Introduction}

The term \emph{reflexive operator} was coined by P. R. Halmos \cite{PRH}, and
studied by many authors, e.g., \cite{Ar}, \cite{Ar2}, \cite{AP}, \cite{C},
\cite{D}, \cite{Dedd}, \cite{DF}, \cite{Fil}, \cite{H1}, \cite{H3},
\cite{HK1}, \cite{HK2}, \cite{HL}, \cite{Han}, \cite{J}, \cite{Kad},
\cite{Lar}, \cite{LS}, \cite{P}, \cite{Sa}, \cite{Sh}. It was proved by J.
Deddens and P. Fillmore \cite{DF} that an $n\times n$ complex matrix $T$ is
reflexive if and only if, for each eigenvalue $\lambda$ of $T$, the two
largest Jordan blocks corresponding to $\lambda$ in the Jordan canonical form
of $T$ differ in size by at most $1$. Later, D. Hadwin \cite{H1} characterized
(algebraic) reflexivity for an $n\times n$ matrix over an arbitrary field; in
this setting the analog of the Jordan form contains blocks, which we will
still call Jordan blocks, of the form
\[
J_{m}\left(  A\right)  =\left(
\begin{array}
[c]{ccccc}%
A & I & 0 & \cdots & 0\\
0 & A & I & \ddots & \vdots\\
0 & 0 & A & \ddots & 0\\
\vdots & \vdots & \ddots & A & I\\
0 & 0 & \cdots & 0 & A
\end{array}
\right)  ,
\]
where $A$ is the companion matrix of an irreducible factor of the minimal
polynomial for $T$. When the irreducible factor has degree $1$, the matrix $A$
is $1\times1$ and an eigenvalue of $T.$ Hadwin \cite{H1} proved that an
$n\times n$ matrix $T$ over a field $\mathbb{F}$ is (algebraically) reflexive
if, for each eigenvalue of $T$, the two largest Jordan blocks differ in size
by at most $1,$ and for an irreducible factor of the minimal polynomial of $T$
that has degree greater than $1$, the two largest Jordan blocks have the same size.

D. Hadwin, E. A. Nordgren, H. Radjavi and P. Rosenthal \cite{HNRR} introduced
the notion of an \emph{orbit-reflexive operator}. They proved that on a
Hilbert space this class includes all normal operators, algebraic operators,
compact operators, contractions and unilateral weighted shift operators. It
was over twenty years before examples were constructed \cite{GR} and \cite{MV}
(see also \cite{Est}) of operators that are not orbit reflexive. V. M\"{u}ller
and J. Vr\v{s}ovsk\'{y} \cite{MV} proved that if $r\left(  T\right)  \neq1$
($r\left(  T\right)  $ denotes the spectral radius of $T$), then $T$ is orbit
reflexive. In \cite{HIY}, where the notion of \emph{null-orbit reflexive
operator} was introduced, the authors proved that every polynomially bounded
operator on a Hilbert space is orbit reflexive.

Recently, M. McHugh and the authors \cite{HIMY}, \cite{Mc} introduced the
notion of $\mathbb{C}$-orbit reflexivity and $\mathbb{R}$-orbit reflexivity,
and they proved that an $n\times n$ complex matrix $T$ is $\mathbb{C}$-orbit
reflexive if and only if it is nilpotent or, among all the Jordan blocks
corresponding to all eigenvalues with modulus equal to the spectral radius
$r\left(  T\right)  $ of $T$, the two largest blocks differ in size by at most
$1$.

In this paper we address orbit reflexivity and $\mathbb{R}$-orbit reflexivity
for a matrix in $\mathcal{M}_{n}\left(  \mathbb{R}\right)  $. In
$\mathcal{M}_{n}\left(  \mathbb{C}\right)  $ every matrix is orbit reflexive
are $\mathbb{C}$-orbit reflexivity is characterized solely in terms of the
Jordan form. Surprisingly, neither of these facts remain true for
$\mathcal{M}_{n}\left(  \mathbb{R}\right)  ;$ the characterizations involve a
little number theory, i.e., linear dependence over $\mathbb{Q}$ of elements in
$\mathbb{R}/\mathbb{Q}$.

\section{Algebraic Results}

An irreducible factor $p\left(  x\right)  $ of a polynomial in $\mathbb{R}%
\left[  x\right]  $ has degree at most $2$. If $p\left(  x\right)
\in\mathbb{R}\left[  x\right]  $ is monic and irreducible and $\deg p=2$, then
$p$ has roots $\alpha\pm i\beta$ with $a,\beta\in\mathbb{R}$, $\beta\neq0,$
$p\left(  x\right)  =\left(  x-\alpha\right)  ^{2}+\beta^{2}$, and the
corresponding companion matrix looks like $\left(
\begin{array}
[c]{cc}%
\alpha & -\beta\\
\beta & \alpha
\end{array}
\right)  =r\left(
\begin{array}
[c]{cc}%
\cos\theta & -\sin\theta\\
\sin\theta & \cos\theta
\end{array}
\right)  ,$ where
\[
\alpha+i\beta=re^{i\theta}%
\]
with $r=\sqrt{\alpha^{2}+\beta^{2}}$ and $0\leq\theta<2\pi$. The matrix
\[
R_{\theta}=\left(
\begin{array}
[c]{cc}%
\cos\theta & -\sin\theta\\
\sin\theta & \cos\theta
\end{array}
\right)
\]
acts on $\mathbb{R}^{2}$ as a counterclockwise rotation by the angle $\theta$.
More generally, if we identify $\mathbb{R}^{2}$ with $\mathbb{C}$, then
$\left(
\begin{array}
[c]{cc}%
\alpha & -\beta\\
\beta & \alpha
\end{array}
\right)  $ acts as multiplication by $\alpha+i\beta$. An $m\times m$ Jordan
block corresponding to $A=\left(
\begin{array}
[c]{cc}%
\alpha & -\beta\\
\beta & \alpha
\end{array}
\right)  $, is given by $J_{m}\left(  A\right)  .$ However, $J_{m}\left(
A\right)  $ is similar to $rJ_{m}\left(  R_{\theta}\right)  $, and we will
represent the Jordan blocks this way. A Jordan block $J$ of $T$ \emph{splits,
}or,\emph{ is splitting}, if the irreducible polynomial associated to it has
degree $1,$ i.e., it corresponds to a real eigenvalue of $T.$

Since a real matrix may have empty spectrum, we let $\sigma_{p}\left(
T\right)  $ denote the \emph{point spectrum} of $T$, the set of real
eigenvalues of $T$. Note that $\sigma_{p}\left(  T\right)  =\varnothing$ is
possible. We define the \emph{spectral radius} to be%
\[
r\left(  T\right)  =\lim_{n\rightarrow\infty}\left\Vert T^{n}\right\Vert
^{\frac{1}{n}},
\]
which is the spectral radius of $T$ considered as a matrix in $\mathcal{M}%
_{n}\left(  \mathbb{C}\right)  $. Note that $r\left(  J_{m}\left(  R_{\theta
}\right)  \right)  =1$ and $r\left(  \left(
\begin{array}
[c]{cc}%
\alpha & -\beta\\
\beta & \alpha
\end{array}
\right)  \right)  =\sqrt{\alpha^{2}+\beta^{2}}$.

If $X$ is a vector space over a field $\mathbb{F}$, and $T$ is a linear
transformation on $X$, then $\mathcal{P}_{\mathbb{F}}\left(  T\right)
=\left\{  p\left(  T\right)  :p\in\mathbb{F}\left[  t\right]  \right\}  $. A
\emph{linear manifold }$M$ in $X$, is the translate of a linear subspace,
i.e., nonempty subset $M$ so that when $x\in M$, $M-x$ is a linear subspace.

We begin with a lemma on the cardinality of the field. In the case where the
field is $\mathbb{R}$ or $\mathbb{C}$, the lemma is an immediate consequence
of the Baire category theorem.

\bigskip

\begin{lemma}
\label{sub}If $\mathbb{F}$ is an uncountable field and $n$ is a positive
integer, then $\mathbb{F}^{n}$ is not a countable union of proper linear subspaces.
\end{lemma}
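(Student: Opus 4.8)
The plan is to induct on $n$ and argue by contradiction. Suppose $\mathbb{F}^n = \bigcup_{k=1}^\infty V_k$ with each $V_k$ a proper linear subspace. The base case $n=1$ is immediate since a proper subspace of $\mathbb{F}^1$ is $\{0\}$, and a countable union of copies of $\{0\}$ is countable, whereas $\mathbb{F}$ is uncountable. For the inductive step, I would like to reduce to lower dimension by slicing $\mathbb{F}^n$ with a family of affine hyperplanes or lines and applying the inductive hypothesis.

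The cleanest route is via lines. Fix two vectors $u, v \in \mathbb{F}^n$ with $v \neq 0$ and consider the affine line $L_t = \{u + tv : t \in \mathbb{F}\}$, which is in bijection with $\mathbb{F}$ and hence uncountable. For each $k$, the intersection $L \cap V_k$ is an affine subspace of $L$; since $V_k$ is a proper subspace it cannot contain $L$ for a generic choice of $u,v$ — more precisely, if $L \cap V_k$ is infinite then $L \subseteq V_k$ (an affine line meeting a subspace in $\geq 2$ points and... actually one must be careful: $L\cap V_k$ could be a single point or all of $L$). So I would instead argue: if no single $V_k$ contains $L$, then each $L\cap V_k$ is at most a point, so $L = \bigcup_k (L\cap V_k)$ is countable, a contradiction. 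Hence some $V_k$ must contain $L$.

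So the real content is: show we can choose a line $L$ contained in no $V_k$. Equivalently, for the direction vector $v$, translating, it suffices to find $v \neq 0$ lying in no $V_k$, i.e., to solve the problem in one lower "flavor" — but that's circular unless I set it up as the induction. Here is the fix using the inductive hypothesis properly. Pick any vector $w \notin V_1$ (possible since $V_1$ is proper). Consider the quotient map $\pi: \mathbb{F}^n \to \mathbb{F}^n / \mathbb{F}w \cong \mathbb{F}^{n-1}$. For each $k$, let $W_k = \pi(V_k)$... but images of subspaces under quotients may fail to be proper. Instead: restrict attention to a complementary subspace. Let $H$ be a hyperplane (so $H \cong \mathbb{F}^{n-1}$) not contained in any $V_k$ — wait, that again needs an argument.

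Let me give the argument I'd actually write. Suppose $\mathbb{F}^n = \bigcup_k V_k$ with each $V_k$ proper. Discard any $V_k$ that is $\{0\}$ or not maximal among the others; we may assume each has dimension exactly $n-1$ (replace each $V_k$ by any hyperplane containing it — there are still countably many). Each hyperplane $V_k$ is the kernel of a nonzero linear functional $\phi_k$, determined up to scalar by a nonzero vector $a_k \in \mathbb{F}^n$. Now pick any line $L = \{u + tv\}$ with $v \notin \bigcup_k V_k^{\perp\text{-ish}}$... The cleanest formulation: choose $v \neq 0$ with $\phi_k(v) \neq 0$ for all $k$; this requires $v \notin \bigcup_k V_k$, which is exactly what we're trying to find, so again I must invoke induction. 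So: by induction, inside the hyperplane $V_1$ (which is $\cong \mathbb{F}^{n-1}$ and hence, by the inductive hypothesis, NOT a countable union of its own proper subspaces), the sets $V_1 \cap V_k$ for $k \geq 2$ are proper subspaces of $V_1$ (proper because $V_1 \neq V_k$ as they're distinct hyperplanes, assuming we've deduplicated), so $\bigcup_{k\geq 2}(V_1 \cap V_k) \subsetneq V_1$. Pick $x_0 \in V_1$ in none of the $V_k$ for $k \geq 2$; then $x_0 \in V_1$ but $x_0 \notin V_k$ for $k\geq 2$, fine — but we need a point in NO $V_k$. Take $y \notin V_1$ (exists, $V_1$ proper). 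Then the line through $x_0$ in direction $y - x_0$: for $t \neq 0$, is $x_0 + t(y-x_0) = (1-t)x_0 + ty$ in some $V_k$? If $k = 1$: it's in $V_1$ iff $ty \in V_1$ (since $x_0 \in V_1$) iff $t = 0$. Good, so no point with $t\neq 0$ is in $V_1$. If $k \geq 2$: the line meets $V_k$ in at most... it could be the whole line or a point. Hmm, if the whole line lies in $V_k$ then $x_0 \in V_k$, contradiction. So the line meets each $V_k$ ($k\geq 2$) in at most one point, and meets $V_1$ only at $x_0$. Thus the line, which has $|\mathbb{F}|$ points, is covered by countably many points — contradiction.

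So here is the writeup.

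\begin{proof}
We argue by induction on $n$. For $n=1$, the only proper subspace of $\mathbb{F}$ is $\{0\}$, so a countable union of proper subspaces is countable and cannot equal the uncountable set $\mathbb{F}$.

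Assume the result for $n-1$, and suppose, for contradiction, that $\mathbb{F}^n=\bigcup_{k=1}^{\infty}V_k$ with each $V_k$ a proper linear subspace. Enlarging each $V_k$ to a hyperplane containing it (which exists and is still proper), we may assume each $V_k$ has dimension $n-1$; deleting repetitions, we may assume the $V_k$ are pairwise distinct hyperplanes.

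By the inductive hypothesis applied to $V_1\cong\mathbb{F}^{n-1}$, the space $V_1$ is not the union of the countably many subspaces $V_1\cap V_k$, $k\geq 2$; note each such intersection is a proper subspace of $V_1$, since $V_1\neq V_k$ forces $V_1\cap V_k\subsetneq V_1$. Hence there is $x_0\in V_1$ with $x_0\notin V_k$ for all $k\geq 2$. Since $V_1$ is proper, choose $y\in\mathbb{F}^n\setminus V_1$, and consider the affine line
\[
L=\{\,x_0+t(y-x_0):t\in\mathbb{F}\,\}=\{\,(1-t)x_0+ty:t\in\mathbb{F}\,\},
\]
which has $\left\vert \mathbb{F}\right\vert$ points and is therefore uncountable.

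We claim each $V_k$ contains at most one point of $L$. For $k=1$: if $(1-t)x_0+ty\in V_1$ then, since $x_0\in V_1$, we get $ty\in V_1$, forcing $t=0$; so $L\cap V_1=\{x_0\}$. For $k\geq 2$: if $L\subseteq V_k$ then in particular $x_0\in V_k$, contrary to the choice of $x_0$; since $L\cap V_k$ is an affine subspace of $L$ not equal to $L$, it is at most a single point. Therefore $L=\bigcup_{k=1}^{\infty}(L\cap V_k)$ is countable, a contradiction. This completes the induction and the proof.
\end{proof}
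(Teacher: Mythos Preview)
Your proof is correct, but it follows a different path from the paper's. The paper gives a one-line argument via the \emph{moment curve}: set $S=\{(1,x,x^{2},\ldots,x^{n-1}):x\in\mathbb{F}\}$; by the Vandermonde determinant any $n$ distinct points of $S$ are linearly independent, so every proper subspace of $\mathbb{F}^{n}$ meets $S$ in at most $n-1$ points, and hence a countable union of proper subspaces meets $S$ in a countable set, which cannot exhaust the uncountable set $S$. Your argument instead runs an induction on $n$, reducing to a hyperplane to locate a point $x_{0}$ lying in exactly one of the covering hyperplanes, and then passing an affine line through $x_{0}$ that each hyperplane can meet only once. Both arguments ultimately exploit the same pigeonhole (``uncountably many points on a curve/line, only countably many intersection points''), but the paper's Vandermonde trick is shorter, avoids induction entirely, and yields the sharper quantitative statement that each proper subspace misses all but $n-1$ points of $S$; your approach, while longer, is more structural and would adapt more readily to contexts where a Vandermonde-type curve is unavailable.
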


\begin{proof}
Let $S=\left\{  \left(  1,x,x^{2},\ldots,x^{n-1}\right)  :x\in\mathbb{F}%
\right\}  $. Since any $n$ distinct elements of $S$ are linearly independent,
the intersection of any proper linear subspace with $S$ has cardinality at
most $n-1.$ However, $S$ is uncountable, so $S$ is not contained in a
countable union of proper linear subspaces of $\mathbb{F}^{n}$.
\end{proof}

\bigskip

\begin{theorem}
\label{alg}If $\mathbb{F}$ is an uncountable field, then every $T\in
\mathcal{M}_{N}\left(  \mathbb{F}\right)  $ is algebraically $\mathbb{F}%
$-orbit reflexive and algebraically orbit-reflexive.
\end{theorem}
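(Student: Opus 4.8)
The plan is to derive both statements from Lemma~\ref{sub}: in each case the pointwise hypothesis on $S$ will produce a countable family of proper linear subspaces whose union is $\mathbb{F}^{N}$, which the lemma forbids unless one of those subspaces is all of $\mathbb{F}^{N}$ --- and that forces $S$ into the orbit.

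For algebraic orbit reflexivity this is immediate. Suppose $S\in\mathcal{M}_{N}(\mathbb{F})$ satisfies $Sx\in\{T^{n}x:n\geq 0\}$ for every $x\in\mathbb{F}^{N}$, and for each $n\geq 0$ set $V_{n}=\ker(S-T^{n})$, a linear subspace. The hypothesis says exactly that $\mathbb{F}^{N}=\bigcup_{n\geq 0}V_{n}$, so Lemma~\ref{sub} gives $V_{n_{0}}=\mathbb{F}^{N}$ for some $n_{0}$, i.e. $S=T^{n_{0}}$ lies in the orbit. (In other words: an operator that is pointwise equal to one of countably many fixed operators must equal one of them. This is the mechanism I would use below too.)

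For algebraic $\mathbb{F}$-orbit reflexivity the scalar turns the sets $\{x:Sx\in\mathbb{F}T^{n}x\}$ into cones rather than subspaces, so I would first reduce the problem. Split off the nilpotent part by Fitting's lemma: $\mathbb{F}^{N}=K\oplus R$ with $K=\ker T^{N}$ and $R=\operatorname{im}T^{N}$ both $T$-invariant, $T|_{R}$ invertible and $T|_{K}$ nilpotent; since each local $\mathbb{F}$-orbit $\bigcup_{n}\mathbb{F}T^{n}x$ lies inside the $T$-cyclic subspace generated by $x$, one checks that $S$ preserves both $K$ and $R$, so the two summands can be handled separately. On $R$ the argument is clean: $Sx=\lambda T^{n}x$ is equivalent to $x\in\ker(T^{-n}S|_{R}-\lambda I)$, so $R=\bigcup_{n}\bigcup_{\mu\in\mathbb{F}}\ker(T^{-n}S|_{R}-\mu I)$; unless some $T^{-n}S|_{R}$ is a scalar operator --- in which case $S|_{R}=\mu T^{n}|_{R}$ and we are done on $R$ --- each inner union is a finite union of proper eigenspaces, and Lemma~\ref{sub} applies again, so $S|_{R}$ is a scalar multiple of a power of $T|_{R}$.

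The remaining, genuinely harder, case is the nilpotent summand $K$, and this is where the real work lies. Take a Jordan decomposition $K=\bigoplus_{i}C_{i}$ into $T$-cyclic blocks; because $S\bigl(T^{j}v_{i}\bigr)\in\bigcup_{n}\mathbb{F}T^{n+j}v_{i}\subseteq C_{i}$, the operator $S$ preserves every block, so it suffices to analyse a single nilpotent Jordan block $T$ on $\mathbb{F}^{k}$ with basis $e_{1},\dots,e_{k}$, $Te_{j}=e_{j-1}$. There $\bigcup_{n}\mathbb{F}T^{n}e_{j}$ is a finite union of lines, so the hypothesis already forces $Se_{j}$ to be a scalar multiple of a single basis vector $e_{p(j)}$; evaluating the hypothesis on the vectors $e_{i}+e_{j}$ and on a cyclic vector $e_{1}+\dots+e_{k}$ then forces, over the relevant indices, the scalars and the shifts $j-p(j)$ to be constant --- that is, $S$ is a scalar multiple of a single power of $T$ on that block. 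Reassembling the blocks (and $R$) into one operator $\mu T^{m}$ is a compatibility check: feeding $v_{a}+v_{b}$, with $v_{a},v_{b}$ cyclic generators of two summands, into the hypothesis and projecting onto each summand pins down a common exponent and a common scalar. I expect this single-block nilpotent analysis, together with the reassembly bookkeeping, to be the main obstacle; the plain-orbit statement and the invertible half of the $\mathbb{F}$-orbit statement each cost just one application of Lemma~\ref{sub}.
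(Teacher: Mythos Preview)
Your argument for plain orbit reflexivity --- covering $\mathbb{F}^{N}$ by the subspaces $\ker(S-T^{n})$ and invoking Lemma~\ref{sub} --- is correct, and in fact more explicit than the paper, which treats only the $\mathbb{F}$-orbit case in detail and then says the orbit case is ``very similar''. Your Fitting decomposition and your treatment of the invertible summand $R$ via
\[
R=\bigcup_{n\geq 0}\ \bigcup_{\mu\in\sigma_{p}(T^{-n}S|_{R})}\ker\bigl(T^{-n}S|_{R}-\mu\bigr)
\]
also coincide with the paper's argument essentially verbatim.

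The genuine gap is in the nilpotent part. Your single-block claim --- that on a nilpotent Jordan block the pointwise $\mathbb{F}$-orbit hypothesis forces $S$ to be a scalar multiple of one power of $T$ --- is not established by your outline, and the outline cannot work as written. On the $2\times 2$ block with $Te_{2}=e_{1}$, $Te_{1}=0$, take $Se_{1}=\alpha e_{1}$, $Se_{2}=0$ with $\alpha\neq 0$. For $x=ae_{1}+be_{2}$ with $b\neq 0$ one has $Sx=\alpha a\,e_{1}=(\alpha a/b)\,Tx$, while for $b=0$ one has $Sx=\alpha x$; thus $Sx\in\mathbb{F}\text{-}\mathrm{Orb}(T)x$ for every $x$, yet $S$ equals neither $\lambda I$ nor $\lambda T$ nor $0$. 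The flaw in your scheme is that once some $Se_{j}=0$ the ``shift'' $j-p(j)$ is undefined, and the consistency checks on $e_{i}+e_{j}$ can then be met with different exponents at different vectors. Any correct treatment of the nilpotent case has to confront this degeneracy head-on; your sketch does not, so the part you flag as ``the main obstacle'' is in fact not handled.

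The paper does not attempt this analysis at all: it quotes the nilpotent case from \cite{HIMY} rather than reproving it. For the reassembly it also takes a different, shorter route than your pairwise compatibility checks. Once $S|_{K}=\alpha N^{s}$ and $S|_{R}=\beta A^{t}$ are known, the paper observes that $S\in\mathrm{AlgLat}_{0}(T)\cap\{T\}'=\mathcal{P}_{\mathbb{F}}(T)$ (a result from \cite{HK1}), so $S=p(T)$ for some polynomial $p$; it then applies the hypothesis at a single separating vector $e$ for $\mathcal{P}_{\mathbb{F}}(T)$, getting $p(T)e=Se=\lambda T^{m}e$ and hence $S=\lambda T^{m}$. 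This separating-vector device replaces all of your block-by-block bookkeeping in one line and is the structural idea you are missing for the global step.
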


\begin{proof}
It is known from \cite{HK1} that \textrm{AlgLat}$_{0}\left(  T\right)
\cap\left\{  T\right\}  ^{\prime}=\mathcal{P}_{\mathbb{F}}\left(  T\right)  $,
and that this algebra of operators has a separating vector $e.$ We know from
\cite{HIMY} that every nilpotent matrix is algebraically $\mathbb{F}$-orbit
reflexive. Suppose $A$ is an invertible $k\times k$ matrix and $S\in
\mathbb{F}$\textrm{-}$\mathrm{OrbRef}_{0}\left(  A\right)  .$ Then, for every
$x\in\mathbb{F}^{k}$, there is a $\lambda\in\mathbb{F}$ and an $m\geq0$ such
that $Sx=\lambda A^{m}x$. Hence,
\[
\mathbb{F}^{k}=\bigcup_{m=0}^{\infty}\bigcup_{\lambda\in\sigma_{p}\left(
A^{-m}S\right)  }Ker\left(  A^{-m}S-\lambda\right)  ,
\]
which, by Lemma \ref{sub}, implies there is an $m\geq0$ and a $\lambda
\in\mathbb{F}$ such that $S=\lambda A^{m}$. Hence $A$ is algebraically
$\mathbb{F}$-orbit reflexive. Since every $T\in\mathcal{M}_{n}\left(
\mathbb{F}\right)  $ is the direct sum of a nilpotent matrix $N$ and an
invertible matrix $A$, it follows that every $S\in\mathbb{F}$\textrm{-}%
$\mathrm{\mathrm{OrbRef}}_{0}\left(  T\right)  $ is a direct sum of $\alpha
N^{s}$ and $\beta A^{t}$ for $\alpha,\beta\in\mathbb{F}$ and integers
$s,t\geq0$. It follows that $S\in$\textrm{AlgLat}$_{0}\left(  T\right)
\cap\left\{  T\right\}  ^{\prime};$ whence there is a polynomial
$p\in\mathbb{F}\left[  x\right]  $ such that $S=p\left(  T\right)  $. However,
there is a $\lambda\in\mathbb{F}$ and an $m\geq0$ such that
\[
p\left(  T\right)  e=Se=\lambda T^{m}e.
\]
Since $e$ is separating for $\mathcal{P}\left(  T\right)  ,$ we see that
$S=p\left(  T\right)  =\lambda T^{m},$ which implies $T$ is $\mathbb{F}$-orbit
reflexive. The proof that $T$ is algebraically orbit reflexive is very similar.
\end{proof}

\bigskip

\begin{corollary}
\label{power}If $T\in\mathcal{M}_{n}\left(  \mathbb{R}\right)  $ and $\left\{
T^{k}:k\geq0\right\}  $ is finite, e.g., $T^{N}=I$ or $T^{N}=0$ for some
positive integer $N,$ then $\mathbb{R}$-$\mathrm{OrbRef}\left(  T\right)
=\mathbb{R}$-$\mathrm{OrbRef}_{0}\left(  T\right)  =\mathbb{R}$-$\mathrm{Orb}%
\left(  T\right)  $ and $\mathrm{OrbRef}\left(  T\right)  =\mathrm{OrbRef}%
_{0}\left(  T\right)  =\mathrm{Orb}\left(  T\right)  $.
\end{corollary}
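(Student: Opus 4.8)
The plan is to obtain the algebraic identities directly from Theorem \ref{alg} and then argue that, because the orbit is finite, passing to closures changes nothing. First, $\mathbb{R}$ is an uncountable field, so Theorem \ref{alg} applies to $T$: it is algebraically $\mathbb{R}$-orbit reflexive and algebraically orbit reflexive, which means $\mathbb{R}$-$\mathrm{OrbRef}_0(T) = \mathbb{R}$-$\mathrm{Orb}(T)$ and $\mathrm{OrbRef}_0(T) = \mathrm{Orb}(T)$. Thus it suffices to establish the two ``topological'' identities $\mathbb{R}$-$\mathrm{OrbRef}(T) = \mathbb{R}$-$\mathrm{OrbRef}_0(T)$ and $\mathrm{OrbRef}(T) = \mathrm{OrbRef}_0(T)$; the hypothesis that $\{T^{k}:k\geq 0\}$ is finite (which covers $T^{N}=I$ and $T^{N}=0$, since in either case $T^{k}$ is eventually periodic or eventually zero) is used only at this step.

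For these, I would first record the trivial inclusions $\mathrm{OrbRef}_0(T) \subseteq \mathrm{OrbRef}(T)$ and $\mathbb{R}$-$\mathrm{OrbRef}_0(T) \subseteq \mathbb{R}$-$\mathrm{OrbRef}(T)$, valid for any operator, since asking $Sx$ to lie in a set is stronger than asking it to lie in the closure of that set. For the reverse inclusions, set $\mathcal{F}=\{T^{k}:k\geq 0\}$, a finite subset of $\mathcal{M}_{n}(\mathbb{R})$. For each $x\in\mathbb{R}^{n}$ the orbit $\{Mx:M\in\mathcal{F}\}$ is finite, hence closed in $\mathbb{R}^{n}$, and the $\mathbb{R}$-orbit $\bigcup_{M\in\mathcal{F}}\mathbb{R}\,(Mx)$ is a finite union of one-dimensional subspaces through the origin (some possibly equal to $\{0\}$), hence also closed. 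Consequently, if $S\in\mathrm{OrbRef}(T)$ then $Sx\in\overline{\{Mx:M\in\mathcal{F}\}}=\{Mx:M\in\mathcal{F}\}$ for every $x$, so $S\in\mathrm{OrbRef}_0(T)$; the identical argument, with lines replacing points, gives $\mathbb{R}$-$\mathrm{OrbRef}(T)\subseteq\mathbb{R}$-$\mathrm{OrbRef}_0(T)$.

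Stringing these together yields $\mathbb{R}$-$\mathrm{OrbRef}(T)=\mathbb{R}$-$\mathrm{OrbRef}_0(T)=\mathbb{R}$-$\mathrm{Orb}(T)$ and $\mathrm{OrbRef}(T)=\mathrm{OrbRef}_0(T)=\mathrm{Orb}(T)$, which is the assertion. I do not anticipate a genuine obstacle here; the only point that warrants care is the closedness claim for the vector $\mathbb{R}$-orbits, which is why I would spell out that $\{\lambda Mx:\lambda\in\mathbb{R}\}$ is a closed $1$-dimensional subspace (or the zero subspace) and that a finite union of closed sets is closed. An alternative that avoids citing Theorem \ref{alg} would be to rerun the Lemma \ref{sub} argument from that theorem's proof directly on $T$ itself, but invoking Theorem \ref{alg} is the shorter route.
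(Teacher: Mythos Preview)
Your proposal is correct and follows essentially the same approach as the paper's proof: the paper also argues that finiteness of $\{T^k\}$ makes each orbit $\mathrm{Orb}(T)x$ and each $\mathbb{R}$-$\mathrm{Orb}(T)x$ closed, whence the topological and algebraic reflexive closures coincide, with the remaining equality $\mathrm{OrbRef}_0(T)=\mathrm{Orb}(T)$ (and its $\mathbb{R}$-version) supplied by Theorem~\ref{alg}. You have simply made explicit the invocation of Theorem~\ref{alg} and the reason a finite union of lines is closed, both of which the paper leaves implicit.
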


\begin{proof}
Since $\left\{  T^{k}:k\geq0\right\}  $ is finite, we know, for every vector
$x$, that $\mathbb{R}$-$\mathrm{Orb}\left(  T\right)  x$ and $\mathrm{Orb}%
\left(  T\right)  x$ are closed, implying $\mathbb{R}$-$\mathrm{OrbRef}\left(
T\right)  =\mathbb{R}$-$\mathrm{OrbRef}_{0}\left(  T\right)  $ and$\mathbb{\ }%
\mathrm{OrbRef}\left(  T\right)  =\mathrm{OrbRef}_{0}\left(  T\right)  $.
\end{proof}

\bigskip

\begin{corollary}
\label{help}If $T\in\mathcal{M}_{n}\left(  \mathbb{R}\right)  ,$ $T=A\oplus B$
with $A^{N}=I$ for some minimal $N\geq1$ and $r\left(  B\right)  <1$, then $T$
is $\mathbb{R}$-orbit reflexive.
\end{corollary}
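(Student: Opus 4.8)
The plan is to prove $\mathbb{R}\text{-}\mathrm{OrbRef}(T)=\overline{\mathbb{R}\text{-}\mathrm{Orb}(T)}$. Since $\mathbb{R}\text{-}\mathrm{OrbRef}(T)$ is always closed (it is an intersection of preimages of closed sets under the continuous maps $S\mapsto Sx$) and contains $\mathbb{R}\text{-}\mathrm{Orb}(T)$, only the inclusion $\mathbb{R}\text{-}\mathrm{OrbRef}(T)\subseteq\overline{\mathbb{R}\text{-}\mathrm{Orb}(T)}$ requires work. Write $T=A\oplus B$ on $V_1\oplus V_2$, and note that $\{A^k:k\geq 0\}=\{A^0,\dots,A^{N-1}\}$ is finite while $r(B)<1$ forces $\|B^k\|\to 0$ (so also $\sup_k\|B^k\|<\infty$). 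Fix $S\in\mathbb{R}\text{-}\mathrm{OrbRef}(T)$. Testing $S$ on vectors $x_1\oplus 0$ and on $0\oplus x_2$, and using $\mathbb{R}\text{-}\mathrm{Orb}(T)(x_1\oplus 0)\subseteq V_1\oplus 0$ and $\mathbb{R}\text{-}\mathrm{Orb}(T)(0\oplus x_2)\subseteq 0\oplus V_2$, one sees that $S=S_1\oplus S_2$ with $S_i$ acting on $V_i$; the same test shows $S_1\in\mathbb{R}\text{-}\mathrm{OrbRef}(A)$, which by Corollary \ref{power} (applicable since $A^N=I$) equals $\mathbb{R}\text{-}\mathrm{Orb}(A)=\{\mu A^j:\mu\in\mathbb{R},\ 0\leq j<N\}$. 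Hence $S_1=\mu A^j$ for some $\mu\in\mathbb{R}$ and $0\leq j<N$.

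The core of the argument is to pin down $S_2$. I would fix once and for all a vector $x_1\in V_1\setminus\{0\}$ lying outside the finitely many proper subspaces $\ker(A^m-cI)$, where $1\leq m<N$ and $c$ ranges over the finitely many eigenvalues of $A^m$ with $A^m\neq cI$; such $x_1$ exists because $V_1\cong\mathbb{R}^{\dim V_1}$ is not a finite union of proper subspaces (Lemma \ref{sub}). For each $x_2\in V_2$, applying $S(x_1\oplus x_2)\in\overline{\mathbb{R}\text{-}\mathrm{Orb}(T)(x_1\oplus x_2)}$ gives scalars $\lambda_n$ and exponents $k_n$ with $\lambda_n A^{k_n}x_1\to\mu A^j x_1$ and $\lambda_n B^{k_n}x_2\to S_2x_2$. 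Since $A^{k_n}x_1$ ranges over a finite set of nonzero vectors, $\|A^{k_n}x_1\|$ is trapped between two positive constants, so $\lambda_n$ is bounded (and, when $\mu\neq 0$, bounded away from $0$). Passing to a subsequence, $\lambda_n\to\lambda$ with $\lambda\neq 0$ and $k_n\equiv r\pmod N$ for a fixed $r$, whence $\lambda A^r x_1=\mu A^j x_1$; moreover either the subsequence of $k_n$ is eventually constant, giving $S_2x_2=\lambda B^k x_2$ for some $k\equiv r\pmod N$, or $k_n\to\infty$ and then $\|B^{k_n}\|\to 0$ forces $S_2x_2=0$. Finally, the genericity of $x_1$ upgrades $\lambda A^r x_1=\mu A^j x_1$ to the operator identity $A^{(r-j)\bmod N}=(\mu/\lambda)I$.

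To finish, set $G=\{m\in\mathbb{Z}/N\mathbb{Z}:A^m\in\mathbb{R}I\}$. A short computation shows $G$ is a subgroup; writing $|G|=d$ and $A^{N/d}=cI$, the relation $A^N=I$ forces $c^d=1$, so $c=\pm 1$, and minimality of $N$ then forces either $d=1$ or $d=2$ with $A^{N/2}=-I$. Consequently, for every $x_2$ the vector $S_2x_2$ lies in the countable set $\{0\}\cup\{\mu B^k x_2:k\equiv j\ (\mathrm{mod}\ N)\}$, enlarged by $\{-\mu B^k x_2:k\equiv j+N/2\ (\mathrm{mod}\ N)\}$ when $d=2$. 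Thus $V_2$ is covered by the countably many subspaces $\{x_2:S_2x_2=Fx_2\}$ as $F$ runs over this list, and Lemma \ref{sub} forces $S_2=F_0$ for a single $F_0$. If $F_0=0$, then $S=\mu(A^j\oplus 0)=\lim_q\mu T^{qN+j}\in\overline{\mathbb{R}\text{-}\mathrm{Orb}(T)}$. If $F_0=\mu B^{k_0}$ with $k_0\equiv j\pmod N$, then $A^{k_0}=A^j$, so $S=\mu T^{k_0}$. If $F_0=-\mu B^{k_0}$ with $k_0\equiv j+N/2\pmod N$, then $A^{k_0}=A^{N/2}A^j=-A^j$, so $\mu A^j=-\mu A^{k_0}$ and $S=-\mu T^{k_0}$. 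In every case $S\in\overline{\mathbb{R}\text{-}\mathrm{Orb}(T)}$. The case $\mu=0$ is the same computation: now $\lambda_n A^{k_n}x_1\to 0$ forces $\lambda_n\to 0$, hence $S_2x_2=\lim\lambda_n B^{k_n}x_2=0$ for all $x_2$, so $S=0$.

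I expect two points to carry the difficulty. The first is the passage to a subsequence on which $\lambda_n$ is bounded and bounded away from $0$: this is exactly where $x_1\neq 0$ and the finiteness of $\{A^k\}$ enter (they keep $\|A^{k_n}x_1\|$ between positive constants), and where $r(B)<1$ is used to dispose of the alternative $k_n\to\infty$. The second is the bookkeeping around the subgroup $G$: the case $A^{N/2}=-I$ genuinely occurs (e.g.\ $A=R_{\pi/2}$ with $N=4$), and it produces both the extra family $-\mu B^k$ and the minus sign in the final identification $S=-\mu T^{k_0}$, so one cannot simply conclude that $S_2=\mu B^j$.
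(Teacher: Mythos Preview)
Your proof is correct and follows the same overall line as the paper's: decompose $S=S_1\oplus S_2$, use Corollary~\ref{power} to get $S_1=\mu A^j$, fix a suitable vector $x_1$ in the domain of $A$, then for each $y$ in the domain of $B$ analyze $S_2y$ via a sequence $\lambda_nT^{k_n}(x_1\oplus y)\to S(x_1\oplus y)$, split into the bounded/unbounded $\{k_n\}$ cases, and invoke Lemma~\ref{sub} on the resulting countable cover of $V_2$.

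The one noteworthy difference is the endgame. The paper, having concluded $S_2\in\mathcal P_{\mathbb R}(B)$, finishes abstractly: it takes a separating vector $y_0$ for $\mathcal P_{\mathbb R}(B)$, applies the orbit-closure hypothesis once more at the single vector $x_0\oplus y_0$, and reads off $S\in\overline{\mathbb R\text{-}\mathrm{Orb}(T)}$ from the separating property. You instead carry out an explicit analysis of the subgroup $G=\{m\in\mathbb Z/N\mathbb Z:A^m\in\mathbb R I\}$, show $|G|\le 2$ with $|G|=2$ forcing $A^{N/2}=-I$, and then verify case by case that $S$ equals $\mu T^{k_0}$, $-\mu T^{k_0}$, or $\lim_q\mu T^{qN+j}$. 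Your route is longer but self-contained and delivers the structural dichotomy on $G$ as a byproduct; the paper's separating-vector finish is quicker but less explicit (and its description of the finite set $E$ of admissible scalars is somewhat loosely stated, whereas your subgroup analysis nails it down). One small imprecision in your write-up: after fixing the residue $r$, a bounded subsequence of $\{k_n\}$ need not be \emph{eventually} constant, only admit a constant sub-subsequence --- but that is all you actually use, so this is harmless.
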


\begin{proof}
Suppose $S\in\mathbb{R}$-$\mathrm{OrbRef}\left(  T\right)  $. Then
$S=S_{1}\oplus S_{2}$ and, by Corollary \ref{power}, we know that
$S_{1}=\lambda A^{s}$ for some $\lambda\in\mathbb{R}$ and some $s\geq0$. If
$S_{1}=0$ it easily follows by considering $x\oplus y$ with $x\neq0$ and $y$
arbitrary, that $S_{2}=0$, which implies $S=0.$ Hence we can assume that
$S_{1}\neq0$.

Note that
\[
S_{1}^{N}=\lambda^{N}\left(  A^{N}\right)  ^{s}=\lambda^{N}.
\]
Let $E=\left\{  e^{2\pi ik/n}\lambda:k=1,\ldots,n\right\}  $. Choose a
separating unit vector $x_{0}$ for $\mathcal{P}_{R}\left(  A\right)  .$ If
$S_{1}x_{0}=\lambda_{1}A^{t}x_{0},$ we have $S_{1}=\lambda_{1}A^{t},$ which
implies $\lambda_{1}\in E$. Suppose $y$ is in the domain of $B$, then there is
a sequence $\left\{  k_{m}\right\}  $ of positive integers and a sequence
$\left\{  \beta_{m}\right\}  $ in $\mathbb{R}$ such that
\[
\beta_{m}T^{k_{m}}\left(  x_{0}\oplus y\right)  \rightarrow S_{1}x_{0}\oplus
S_{2}y.
\]
We have $\beta_{m}A^{k_{m}}x_{0}\rightarrow\lambda A^{s}x_{0},$ which implies
$\left\{  \beta_{m}\right\}  $ is bounded. If $\left\{  k_{m}\right\}  $ is
unbounded, then it has a subsequnce diverging to $\infty$, which implies
$S_{2}y=0$, since $\left\Vert B^{k}\right\Vert \rightarrow0$ as $k\rightarrow
\infty$. If $\left\{  k_{m}\right\}  $ is bounded, then it has a subsequence
$\left\{  \beta_{k_{j}}\right\}  $ with a constant value $t$, and we get
$\beta_{m_{j}}\rightarrow\lambda_{1}$ for some $\lambda_{1}\in E$. Hence the
domain of $B$ is a countable union,%
\[
\ker S_{2}\cup\bigcup_{k\in\mathbb{N},\gamma\in E}\ker\left(  S_{2}-\gamma
B^{k}\right)  .
\]
It follows from Lemma \ref{sub} that $S_{2}\in\mathcal{P}_{\mathbb{R}}\left(
B\right)  .$ If we choose a vector $y_{0}$ that is separating for
$\mathcal{P}_{\mathbb{R}}\left(  B\right)  ,$ we see from $S\left(
x_{0}\oplus y_{0}\right)  \in\left[  \mathbb{R}\text{-}\mathrm{Orb(T)}\left(
x_{0}\oplus y_{0}\right)  \right]  ^{-},$ that $S\in\mathbb{R}$%
-$\mathrm{Orb(T).}$
\end{proof}

\bigskip

\section{Main Results}

A key ingredient in our proofs is the following well-known result from number
theory. We sketch the elementary proof for completeness. For notation we let
$\mathbb{T}=\left\{  z\in\mathbb{C}\mathbf{:}\left\vert z\right\vert
=1\right\}  $ be the unit circle, $\mathbb{T}^{k}$ a direct product of $k$
copies of $\mathbb{T}$, and $\mu_{k}=\mu\times\cdots\times\mu$ be Haar measure
on $\mathbb{T}^{k}$, where $\mu$ is normalized arc length on $\mathbb{T}$. If
$\lambda=\left(  z_{1},\cdots,z_{k}\right)  \in\mathbb{T}^{k}$ and we define%
\[
\lambda^{n}=\left(  z_{1}^{n},\ldots,z_{k}^{n}\right)
\]
for $n=0,1,2,\ldots$ .

\bigskip

\begin{lemma}
\label{ind}Suppose $\theta_{1},\ldots,\theta_{k}\in\mathbb{R}$, and let
$\lambda=\left(  e^{i\theta_{1}},\ldots,e^{i\theta_{k}}\right)  $. The
following are equivalent:

\begin{enumerate}
\item $\left\{  \lambda,\lambda^{2},\ldots\right\}  $ is dense in
$\mathbb{T}^{k}$,

\item $\left\{  1,\theta_{1}/2\pi,\ldots,\theta_{k}/2\pi\right\}  $ is
linearly independent over $\mathbb{Q}$,

\item for every $f\in C\left(  \mathbb{T}^{k}\right)  $ we have%
\[
\lim_{N\rightarrow\infty}\frac{1}{N}%
{\displaystyle\sum_{n=1}^{N}}
f\left(  \lambda^{n}\right)  =\int_{\mathbb{T}^{k}}fd\mu_{k}.
\]

\end{enumerate}
\end{lemma}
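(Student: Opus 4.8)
The plan is to prove the three implications (2) $\Rightarrow$ (3) $\Rightarrow$ (1) $\Rightarrow$ (2), which is the most economical route since Weyl's equidistribution criterion gives the hardest step (2) $\Rightarrow$ (3) directly, and the remaining implications are soft. The whole argument rests on the classical Weyl criterion: a sequence $(\lambda^n)$ in $\mathbb{T}^k$ is equidistributed with respect to $\mu_k$ if and only if for every nonzero character $\chi$ of $\mathbb{T}^k$ one has $\frac{1}{N}\sum_{n=1}^N \chi(\lambda^n) \to 0$. Since characters of $\mathbb{T}^k$ are exactly the maps $(z_1,\dots,z_k) \mapsto z_1^{m_1}\cdots z_k^{m_k}$ for $(m_1,\dots,m_k) \in \mathbb{Z}^k$, and trigonometric polynomials are dense in $C(\mathbb{T}^k)$ by Stone--Weierstrass, proving the integral formula in (3) reduces to checking it on characters.

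For (2) $\Rightarrow$ (3): Let $f \in C(\mathbb{T}^k)$. By Stone--Weierstrass, approximate $f$ uniformly by trigonometric polynomials, so by a standard $\varepsilon/3$ argument it suffices to verify the limit formula when $f$ is a single character $\chi_{\mathbf{m}}(z) = z_1^{m_1}\cdots z_k^{m_k}$. If $\mathbf{m} = 0$ both sides equal $1$. If $\mathbf{m} \neq 0$, then $\chi_{\mathbf{m}}(\lambda^n) = e^{in(m_1\theta_1 + \cdots + m_k\theta_k)}$, and the right-hand side $\int_{\mathbb{T}^k}\chi_{\mathbf{m}}\,d\mu_k = 0$. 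Set $\phi = m_1\theta_1 + \cdots + m_k\theta_k$. The hypothesis (2) — linear independence of $\{1, \theta_1/2\pi, \dots, \theta_k/2\pi\}$ over $\mathbb{Q}$ — forces $\phi \notin 2\pi\mathbb{Z}$ (otherwise $m_1(\theta_1/2\pi) + \cdots + m_k(\theta_k/2\pi)$ would be a nonzero integer combination equal to an integer, contradicting independence since not all $m_j$ are zero). Hence $e^{i\phi} \neq 1$, and the geometric sum gives
\[
\left| \frac{1}{N}\sum_{n=1}^N e^{in\phi} \right| = \frac{1}{N}\left| \frac{e^{i\phi}(e^{iN\phi}-1)}{e^{i\phi}-1} \right| \leq \frac{2}{N|e^{i\phi}-1|} \longrightarrow 0.
\]
This completes (2) $\Rightarrow$ (3).

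For (3) $\Rightarrow$ (1): Suppose the orbit $\{\lambda, \lambda^2, \dots\}$ is not dense. Then there is a point $w \in \mathbb{T}^k$ and an open ball $B$ around $w$ containing no $\lambda^n$. Choose a nonzero continuous function $f \geq 0$ supported in $B$ with $\int_{\mathbb{T}^k} f\,d\mu_k > 0$ (possible since $\mu_k$ has full support). Then $f(\lambda^n) = 0$ for all $n \geq 1$, so the left-hand side of (3) is $0$ while the right-hand side is positive — a contradiction. Hence the orbit is dense.

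For (1) $\Rightarrow$ (2): Suppose (2) fails, so there exist integers $m_0, m_1, \dots, m_k$, not all zero, with $m_0 + m_1(\theta_1/2\pi) + \cdots + m_k(\theta_k/2\pi) = 0$; since $\theta_j/2\pi$ need not be rational individually, not all of $m_1,\dots,m_k$ can be zero. Then the character $\chi_{\mathbf{m}}$ with $\mathbf{m} = (m_1,\dots,m_k)$ satisfies $\chi_{\mathbf{m}}(\lambda^n) = e^{in(m_1\theta_1+\cdots+m_k\theta_k)} = e^{2\pi i n(-m_0)} = 1$ for all $n$. Thus every point of the orbit lies in the proper closed subset $\{z \in \mathbb{T}^k : \chi_{\mathbf{m}}(z) = 1\}$, so the orbit cannot be dense, contradicting (1). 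This closes the cycle.

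The main obstacle is the implication (2) $\Rightarrow$ (3), which genuinely requires the Weyl equidistribution machinery — the reduction to characters via Stone--Weierstrass plus the geometric-series estimate — whereas the other two implications are essentially formal consequences of the regularity of Haar measure and the definition of density. I would present (2) $\Rightarrow$ (3) in full (it is short) and treat (3) $\Rightarrow$ (1) and (1) $\Rightarrow$ (2) briskly.
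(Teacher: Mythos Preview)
Your proof is correct and follows essentially the same route as the paper's: the cycle $(2)\Rightarrow(3)\Rightarrow(1)\Rightarrow(2)$ via characters (monomials), the geometric-series estimate, Stone--Weierstrass, and the proper closed level set $\chi_{\mathbf m}^{-1}(\{1\})$. One small expository slip in $(1)\Rightarrow(2)$: the reason not all of $m_1,\dots,m_k$ can vanish is simply that $m_1=\cdots=m_k=0$ would force $m_0=0$ as well, contradicting the assumption---the irrationality of individual $\theta_j/2\pi$ is irrelevant here.
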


\begin{proof}
If $f\left(  z_{1},\ldots,z_{k}\right)  =z_{1}^{m_{1}}\cdots z_{k}^{m_{k}}$
for integers $m_{1},\ldots,m_{k}$, then statement $\left(  2\right)  $ is
equivalent to saying $f\left(  \lambda\right)  \neq1$ whenever $\left(
m_{1},\ldots,m_{k}\right)  \neq\left(  0,\ldots,0\right)  $. For such a
monomial $f$ we know that $\int_{\mathbb{T}^{k}}fd\mu_{k}=0,$ and we know that
$f\left(  \lambda^{n}\right)  =f\left(  \lambda\right)  ^{n}$ for $n\geq1.$
Thus statement $\left(  2\right)  $ implies that%
\[
\lim_{N\rightarrow\infty}\frac{1}{N}%
{\displaystyle\sum_{n=1}^{N}}
f\left(  \lambda^{n}\right)  =\lim_{N\rightarrow\infty}\frac{1}{N}%
\frac{1-f\left(  \lambda\right)  ^{N}}{1-f\left(  \lambda\right)  }f\left(
\lambda\right)  \rightarrow0=\int_{\mathbb{T}^{k}}fd\mu_{k}.
\]
It follows from the Stone-Weierstrass theorem that the span of the monomials
is dense in $C\left(  \mathbb{T}^{k}\right)  ,$ so we see that $\left(
2\right)  \Longrightarrow\left(  3\right)  .$ On the other hand $\left(
3\right)  $ implies that, for every nonnegative continuous function $f$
vanishing on $\left\{  \lambda,\lambda^{2},\ldots\right\}  $ we must have
$\int_{\mathbb{T}^{k}}fd\mu_{k}=0$, which implies $f=0$. If $x\in
\mathbb{T}^{k}\backslash\left\{  \lambda,\lambda^{2},\ldots\right\}  ^{-},$
there is a nonnegative continuous function $f$ vanishing on $\left\{
\lambda,\lambda^{2},\ldots\right\}  $ with $f\left(  x\right)  \neq0$. Hence
$\left(  3\right)  \Longrightarrow\left(  1\right)  $. If $f$ is a nonconstant
monomial and $f\left(  \lambda\right)  =1,$ then the closure of $\left\{
\lambda,\lambda^{2},\ldots\right\}  $ is contained in $f^{-1}\left(  \left\{
1\right\}  \right)  ,$ which proves that $\left(  1\right)  \Longrightarrow
\left(  2\right)  $.\bigskip
\end{proof}

The next two results show that in $\mathcal{M}_{N}\left(  \mathbb{R}\right)  $
orbit reflexivity is not the same as in $\mathcal{M}_{N}\left(  \mathbb{C}%
\right)  $.

\begin{lemma}
\label{hard}Suppose $k\in\mathbb{N}$, $\theta_{1},\ldots,\theta_{k}\in
\lbrack0,2\pi),$ and $T\in\mathcal{M}_{N}\left(  \mathbb{R}\right)  $ is a
direct sum of $R_{\theta_{1}}\oplus\cdots\oplus R_{\theta_{k}}\oplus B\oplus
C$ with $B^{2}=1$ and $r\left(  C\right)  <$ $1$. (The summands $B$ and $C$
might not be present.) The following are equivalent:

\begin{enumerate}
\item $T$ is orbit reflexive

\item $T$ is $\mathbb{R}$-orbit reflexive

\item There are nonzero integers $s_{1},\ldots,s_{k}$ and an integer $t$ such
that
\[%
{\displaystyle\sum_{j=1}^{k}}
s_{j}\theta_{j}=2\pi t.
\]

\item For every $j\in\left\{  1,\ldots,k\right\}  ,$ $\theta_{j}/2\pi\in
sp_{\mathbb{Q}}\left(  \left\{  1\right\}  \cup\left\{  \theta_{i}/2\pi:1\leq
i\neq j\leq k\right\}  \right)  .$
\end{enumerate}
\end{lemma}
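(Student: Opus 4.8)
The plan is to prove the cycle $(4)\Rightarrow(3)\Rightarrow(2)\Rightarrow(1)\Rightarrow(4)$, or possibly to split into the two easy implications among $(3)$ and $(4)$ together with the substantive analytic argument connecting them to orbit reflexivity. The equivalence $(3)\Leftrightarrow(4)$ is pure linear algebra over $\mathbb{Q}$: condition $(3)$ says exactly that the vector $(1,\theta_1/2\pi,\dots,\theta_k/2\pi)$ satisfies some nontrivial integer relation in which \emph{all} of the $\theta_j/2\pi$-coefficients are nonzero, while $(4)$ says each $\theta_j/2\pi$ lies in the $\mathbb{Q}$-span of $1$ together with the \emph{other} $\theta_i/2\pi$. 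To see $(4)\Rightarrow(3)$, pick the relation witnessing $\theta_1/2\pi\in sp_{\mathbb{Q}}(\dots)$; it has nonzero coefficient on $\theta_1/2\pi$ but possibly zero coefficients elsewhere. One then adds suitable integer multiples of the analogous relations for $\theta_2/2\pi,\dots,\theta_k/2\pi$ to ensure every coefficient becomes nonzero, clearing denominators at the end. The converse $(3)\Rightarrow(4)$ is immediate: a single relation with all coefficients nonzero can be solved for any one $\theta_j/2\pi$.

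For $(1)\Rightarrow(4)$ and its contrapositive, I would argue that if $(4)$ fails, some $\theta_j/2\pi$ is \emph{not} in the $\mathbb{Q}$-span of $1$ and the others; reordering, say $\theta_1/2\pi,\dots,\theta_\ell/2\pi$ together with $1$ form a maximal $\mathbb{Q}$-independent subset and $\theta_1/2\pi$ is among them. By Lemma~\ref{ind}, $\{(e^{i\theta_1},\dots,e^{i\theta_\ell})^n : n\geq 1\}$ is dense in $\mathbb{T}^\ell$, and the remaining rotations $R_{\theta_{\ell+1}},\dots,R_{\theta_k}$ are, on the relevant orbit-closure, continuous functions of the first $\ell$ coordinates (since each $\theta_i/2\pi$, $i>\ell$, is a rational combination of $1,\theta_1/2\pi,\dots,\theta_\ell/2\pi$). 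The point is then to exhibit an operator $S$ that is locally in the orbit but not globally: the usual construction (as in the $\mathbb{C}$-orbit-reflexive counterexamples) takes $S$ to act as $-R_{\theta_1}$ (or some rotation not realized by any power) on the first block while agreeing with an appropriate power of $T$ elsewhere. Because of the density, for every single vector $x$ one can find powers $T^{n}$ (with no scalar needed) approximating $Sx$ arbitrarily well, so $S\in\mathrm{OrbRef}(T)$; but $S$ is not a scalar multiple of any $T^m$ because the rotation angle it induces on block $1$ is not among $\{m\theta_1 : m\geq 0\}$ modulo the constraints forced by the contraction $C$ and involution $B$. This shows $T$ is not orbit reflexive, giving $(1)\Rightarrow(4)$.

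For the forward directions $(3)\Rightarrow(2)\Rightarrow(1)$: $(2)\Rightarrow(1)$ is trivial since $\mathbb{R}\text{-}\mathrm{Orb}(T)\supseteq\mathrm{Orb}(T)$ makes $\mathbb{R}$-orbit reflexivity the a priori weaker property — wait, more carefully, $\mathrm{Orb}(T)\subseteq\mathbb{R}\text{-}\mathrm{Orb}(T)$ so $\mathbb{R}\text{-}\mathrm{OrbRef}(T)\supseteq\mathrm{OrbRef}(T)\supseteq\mathrm{Orb}(T)$, hence $\mathbb{R}$-orbit reflexive $\Rightarrow$ orbit reflexive is the direction that needs the scalars to not help, which is exactly what $(2)\Rightarrow(1)$ asserts and which follows because any $S\in\mathrm{OrbRef}(T)$ is a fortiori in $\mathbb{R}\text{-}\mathrm{OrbRef}(T)=\mathbb{R}\text{-}\mathrm{Orb}(T)=\mathrm{Orb}(T)$ once one checks $\mathbb{R}$-orbit reflexivity already forces membership in the unscaled orbit — this uses $B^2=I$ and $r(C)<1$ to rule out nontrivial scalars, much as in Corollary~\ref{help}. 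The real content is $(3)\Rightarrow(2)$: assuming the integer relation $\sum s_j\theta_j = 2\pi t$ with all $s_j\neq 0$, I must show every $S\in\mathbb{R}\text{-}\mathrm{OrbRef}(T)$ lies in $\mathbb{R}\text{-}\mathrm{Orb}(T)$. The relation means $\{\lambda,\lambda^2,\dots\}$ is \emph{not} dense in $\mathbb{T}^k$ (Lemma~\ref{ind}, $(1)\Leftrightarrow(2)$), indeed its closure is a proper closed subgroup $H\subseteq\mathbb{T}^k$; one analyzes the orbit of a well-chosen vector $x_0\oplus y_0$, with $y_0$ separating for $\mathcal{P}_{\mathbb{R}}(B\oplus C)$, and uses that on the compact group $H$ the sequence $\lambda^n$ is dense, so any $S$ agreeing locally with scalar multiples of powers of $T$ must, on the separating vector, agree with an actual limit of $\beta_m T^{k_m}$, and then a separating-vector argument (as in Theorem~\ref{alg} and Corollary~\ref{help}) upgrades this to $S\in\mathbb{R}\text{-}\mathrm{Orb}(T)$.

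The main obstacle I anticipate is the counterexample construction in $(1)\Rightarrow(4)$: one must verify carefully that the candidate $S$ is \emph{locally} in $\mathrm{Orb}(T)$ — i.e. for each individual vector, not merely on a separating vector — which requires exploiting the density from Lemma~\ref{ind} uniformly enough, and simultaneously that $S\notin\mathrm{Orb}(T)$, which requires pinning down precisely which operators $\mathrm{Orb}(T)$ and its closure contain when a contraction $C$ with $r(C)<1$ and an involution $B$ are present as summands. The bookkeeping of how the rational relations among the $\theta_i/2\pi$ for $i>\ell$ force those blocks to be determined by the first $\ell$ is routine but must be done to know the orbit closure exactly.
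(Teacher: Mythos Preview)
Your counterexample for the contrapositive of $(1)\Rightarrow(4)$ has a genuine gap. You propose that $S$ act on the first block as $-R_{\theta_1}$, or more generally as ``some rotation not realized by any power,'' and then argue $S\notin\mathrm{Orb}(T)$ because its angle is not among $\{m\theta_1:m\geq 0\}$. But orbit reflexivity compares $\mathrm{OrbRef}(T)$ with the \emph{SOT closure} of $\mathrm{Orb}(T)$, not with $\mathrm{Orb}(T)$ itself. When $(4)$ fails you have (as you yourself set up) that $\{(e^{i\theta_1},\dots,e^{i\theta_\ell})^n\}$ is dense in $\mathbb{T}^\ell$ by Lemma~\ref{ind}; hence every tuple of rotations $R_{\alpha_1}\oplus\cdots\oplus R_{\alpha_\ell}$ already lies in the SOT closure of $\{T^n\}$ restricted to those blocks. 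Any $S$ that is a rotation on the first block and compatible rotations elsewhere will therefore lie in $\overline{\mathrm{Orb}(T)}^{SOT}$ and cannot witness failure of orbit reflexivity. The paper's construction avoids this by taking $S=F\oplus I\oplus\cdots\oplus I\oplus P$ with $F=\bigl(\begin{smallmatrix}0&1\\1&0\end{smallmatrix}\bigr)$ the \emph{flip} (a reflection, $\det F=-1$). The point is that for each individual $x\in\mathbb{R}^2$ there is an $\alpha=\alpha(x)$ with $Fx=R_\alpha x$, so the density argument still places $S$ in $\mathrm{OrbRef}(T)$; but $F$ does not commute with $R_{\theta_1}$ (since $\sin\theta_1\neq 0$), hence $ST\neq TS$ and $S$ cannot lie in the SOT closure of the powers of $T$.

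Your sketch of $(3)\Rightarrow(2)$ also misses the mechanism by which condition $(3)$ is actually used. The paper does not argue via the closed subgroup $H\subset\mathbb{T}^k$ and a single separating vector. Instead, given $S\in\mathbb{R}\text{-}\mathrm{OrbRef}(T)$ with $S_1x\neq 0$, one passes to a subsequence so that $T^{m_n}\to R_{\alpha_1}\oplus\cdots\oplus R_{\alpha_k}\oplus\cdots$; the relation $\sum_j s_j\theta_j\in 2\pi\mathbb{Z}$ forces $\sum_j s_j\alpha_j\in 2\pi\mathbb{Z}$ for the limit angles as well. Varying the test vector in the first block while fixing the others shows the admissible $\alpha_1$'s lie in the discrete set $\alpha_1+\frac{\pi}{s_1}\mathbb{Z}$, and then Lemma~\ref{sub} (the uncountable-field covering lemma) forces $S_1=\|S_1x\|R_{\gamma_1}$ for a single $\gamma_1$. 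Only after each $S_j$, $D$, $E$ is identified as a polynomial in its block does the separating-vector argument finish the proof. Your outline of $(3)\Leftrightarrow(4)$ and of $(2)\Rightarrow(1)$ is essentially correct, though the latter is cleaner than you present it: once $S\in\mathrm{OrbRef}(T)\subseteq\mathbb{R}\text{-}\mathrm{OrbRef}(T)=\overline{\mathbb{R}\text{-}\mathrm{Orb}(T)}\subseteq\mathcal{P}_{\mathbb{R}}(T)$, a single separating vector $e$ with $T^{m_n}e\to Se$ gives $T^{m_n}\to S$ directly.
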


\begin{proof}
The equivalence of $\left(  4\right)  $ and $\left(  3\right)  $ is easy.

$\left(  1\right)  \Longrightarrow\left(  4\right)  $ and $\left(  2\right)
\Longrightarrow\left(  4\right)  .$ Assume $\left(  4\right)  $ is false. We
can assume that
\[
\theta_{1}/2\pi\notin sp_{\mathbb{Q}}\left(  \left\{  1\right\}  \cup\left\{
\theta_{i}/2\pi:2\leq i\leq k\right\}  \right)  .
\]
We can assume that $\left\{  1,\theta_{2}/2\pi,\ldots,\theta s/2\pi\right\}  $
is a basis for the linear span over $\mathbb{Q}$ of $\left\{  1\right\}
\cup\left\{  \theta_{i}/2\pi:2\leq i\leq k\right\}  ,$ which makes $\theta
_{1}/2\pi,\theta_{2}/2\pi,\ldots,\theta_{s}/2\pi$ irrational, and makes
$\left\{  1,\theta_{1}/2\pi,\ldots,\theta s/2\pi\right\}  $ linearly
independent over $\mathbb{Q}$. Since each $\theta_{j}/2\pi$, $s<j\leq k$ is a
rational linear combination of $1,\theta_{2}/2\pi,\ldots,\theta s/2\pi$, there
is a positive integer $d$ such that, for $s<j\leq k$, each $d\theta_{j}/2\pi$
is an integral linear combination of $1,\theta_{2}/2\pi,\ldots,\theta s/2\pi.$
Suppose $\alpha\in\lbrack0,2\pi)$. Since $\left\{  1,\theta_{1}/4\pi
d,\ldots,\theta s/4\pi d\right\}  $ is linearly independent over $\mathbb{Q}$,
it follows from Lemma \ref{ind} that there is a sequence $\left\{
m_{n}\right\}  $ of positive integers such that $m_{n}\rightarrow\infty,$
\[
R_{\theta_{1}}^{m_{n}}=R_{m_{n}\theta_{1}}\rightarrow R_{\alpha/2d},
\]%
\[
R_{\theta_{j}}^{m_{n}}=R_{m_{n}\theta_{j}}\rightarrow I
\]
for $2\leq j\leq s.$ This implies that $R_{\theta_{1}}^{2dm_{n}}%
=R_{2dm_{n}\theta_{1}}\rightarrow R_{\alpha}$ and $R_{\theta j}^{2dm_{n}%
}=R_{2dm_{n}\theta_{j}}\rightarrow I$ for $2\leq j\leq s.$ If $s<j\leq k$,
there are integers $t_{2},\ldots,t_{s}$ and $t$ such that $d\theta_{j}=t2\pi+%
{\displaystyle\sum_{i=2}^{s}}
t_{i}\theta_{i},$ which implies%
\[
R_{\theta j}^{2dm_{n}}=I^{2tm_{n}}%
{\displaystyle\prod_{i=2}^{s}}
\left(  R_{m_{n}\theta_{i}}\right)  ^{2t_{i}}\rightarrow I.
\]
Moreover,
\[
\left(  B\oplus C\right)  ^{2dm_{n}}=B^{2dm_{n}}\oplus C^{2dm_{n}}\rightarrow
I\oplus0=P.
\]
Let $F=\left(
\begin{array}
[c]{cc}%
0 & 1\\
1 & 0
\end{array}
\right)  ,$ and define $S=F\oplus I\oplus\cdots\oplus I\oplus P$. It follows
from the fact that, for every $x\in\mathbb{R}^{2}$ there is an $\alpha
\in\lbrack0,2\pi)$ such that $Fx=R_{\alpha}x,$ that $S\in\mathrm{OrbRef}%
\left(  T\right)  \subseteq\mathbb{R}$-$\mathrm{OrbRef}\left(  T\right)  $.
Since $FR_{\theta_{1}}\neq R_{\theta_{1}}F$ (because $\sin\theta_{1}\neq0$),
it follows that $ST\neq TS,$ and we see that both $\left(  1\right)  $ and
$\left(  2\right)  $ are false.

$\left(  3\right)  \Longrightarrow\left(  2\right)  .$ Suppose $\left(
3\right)  $ is true. If $k=1,$ then $\theta_{1}/2\pi\in\mathbb{Q}$, and
$R_{\theta_{1}}^{N}=I$ for some positive integer $N,$ which, by Corollary
\ref{help}, implies $T$ is $\mathbb{R}$-orbit reflexive. Hence we can assume
$k\geq2$, which, by $\left(  3\right)  $, implies $\theta_{1}/2\pi
\notin\mathbb{Q}$. Suppose $S\in\mathbb{R}$-$\mathrm{OrbRef}\left(  T\right)
.$ Since $\mathbb{R}$-$\mathrm{OrbRef}\left(  T\right)  $ is contained in
\textrm{AlgLat}$\left(  T\right)  ,$ we can write $S=S_{1}\oplus\cdots\oplus
S_{k}\oplus$ $D\oplus E$. Suppose $x\neq0$ is in the domain of $S_{1}$. We
consider two cases:

\textbf{Case 1}. $S_{1}x=0.$ If $y$ is any vector orthogonal to the domain of
$S_{1}$, there is a sequence $\left\{  m_{n}\right\}  $ of nonnegative
integers and a sequence $\left\{  \lambda_{n}\right\}  $ in $\mathbb{R}$ such
that $S\left(  x\oplus y\right)  =\lim\lambda_{n}T^{m_{n}}\left(  x\oplus
y\right)  .$ Thus $\left\vert \lambda_{n}\right\vert \left\Vert x\right\Vert
\rightarrow\left\Vert S_{1}x\right\Vert =0,$ which implies $\lambda
_{n}\rightarrow0,$ and since $\left\{  \left\Vert T^{n}\right\Vert \right\}  $
is bounded, we see that $S\left(  x\oplus y\right)  =0.$ Thus $0=S_{2}%
=\cdots=S_{k}$ and $D=0,E=0.$ Since $k\geq2,$ and arguing as above (when we
showed $S_{1}=0\Longrightarrow S_{2}=0$), we know $S_{1}=0,$ and thus $S=0.$

\textbf{Case 2}. $S_{1}x\neq0$. Let $x_{1}=x,$ and choose $x_{j}$ in the
domain of $S_{j}$ for $2\leq j\leq k$ with each $\left\Vert x_{j}\right\Vert
=\left\Vert x\right\Vert ,$ and let $u=x\oplus x_{2}\oplus\cdots\oplus
x_{k}\oplus0\oplus0$. Since $R_{\theta_{1}}\oplus\cdots\oplus R_{\theta_{k}}$
is an isometry and $S\in\mathbb{R}$-$\mathrm{OrbRef}\left(  T\right)  ,$ it
follows that there is a sequence $\left\{  m_{n}\right\}  $ of nonnegative
integers and a sequence $\left\{  \lambda_{n}\right\}  $ in $\mathbb{R}$ such
that $0\neq Su=\lim_{n\rightarrow\infty}\lambda_{n}T^{m_{n}}u.$ Hence,
$\left\{  \lambda_{n}\right\}  $ is bounded, so we can assume that
$\lambda_{n}\rightarrow\lambda$ for some nonzero $\lambda\in\mathbb{R}$, and
we can assume that $T^{m_{n}}\rightarrow R_{\alpha_{1}}\oplus\cdots\oplus
R_{\alpha_{k}}\oplus F\oplus G$ with $0\leq\alpha_{1},\ldots,\alpha_{k}<2\pi$.
We know that $\left\vert \lambda\right\vert =\left\Vert S_{1}x\right\Vert
\neq0$, and, for $1\leq j\leq k,$ $S_{j}x_{j}=\left\Vert S_{1}x\right\Vert
R_{\alpha_{j}}x_{j}$ if $\lambda>0$ and $S_{j}x_{j}=\left\Vert S_{1}%
x\right\Vert R_{\alpha_{j}+\pi}x_{j}$ if $\lambda<0$. Moreover, since
$R_{\theta_{j}}^{m_{n}}\rightarrow R_{\alpha_{j}}$ for $1\leq j\leq k,$ we
have, from $\left(  3\right)  $, that $%
{\displaystyle\sum_{j=1}^{k}}
s_{j}\alpha_{j}\in2\pi\mathbb{Z}$, and thus $%
{\displaystyle\sum_{j=1}^{k}}
s_{j}\left(  \alpha_{j}+\pi\right)  \in\pi\mathbb{Z}$. Suppose now we replace
$x_{1}$ with another vector $y$ in the domain of $S_{1}$ with $\left\Vert
y\right\Vert =\left\Vert x_{1}\right\Vert ,$ we get real numbers $\beta
_{1},\ldots,\beta_{k}$ such that $S_{1}y=\left\Vert S_{1}y\right\Vert
R_{\beta_{1}}y$ and $S_{j}x_{j}=\left\Vert S_{1}y\right\Vert R_{\beta_{j}%
}x_{j}=\left\Vert S_{1}y\right\Vert R_{\alpha_{j}}x_{j}$ for $2\leq j\leq k,$
and such that $%
{\displaystyle\sum_{j=1}^{k}}
s_{j}\beta_{j}\in\pi\mathbb{Z}$. However, for $2\leq j\leq k,$ we must have
$\beta_{j}-\alpha_{j}\in\pi\mathbb{Z}$. Hence, $s_{1}\beta_{1}-s_{1}\alpha
_{1}\in\pi\mathbb{Z}$. Hence the domain of $S_{1}$ is the union%
\[
\bigcup_{n\in\mathbb{Z}}\ker\left(  S_{1}-\left\Vert S_{1}x\right\Vert
R_{\alpha_{1}+n\pi/s_{1}}\right)  ,
\]
which, by Lemma \ref{sub}, implies that there is a $\gamma_{1}\in\lbrack
0,2\pi)\cap\left(  \alpha_{1}+\frac{\pi}{s_{1}}\mathbb{Z}+2\pi\mathbb{Z}%
\right)  $ such that $S_{1}=\left\Vert S_{1}x\right\Vert R_{\gamma_{1}}$.
Similarly, we get, for $2\leq j\leq k,$ that $S_{j}=\left\Vert S_{1}%
x\right\Vert R_{\gamma_{j}}$ for some $\gamma_{j}\in\lbrack0,2\pi)$.

Applying the same reasoning we see that $D=\left\Vert S_{1}x\right\Vert B$ or
$D=-\left\Vert S_{1}x\right\Vert B$. Also, for every $f$ in the domain of $C$
we get $Ef\in\mathbb{R}$-$\mathrm{Orb}\left(  C\right)  f,$ so, by Theorem
\ref{alg}, $E\in\mathbb{R}$-$\mathrm{Orb}\left(  C\right)  .$ We therefore
have $S_{j}\in\mathcal{P}_{\mathbb{R}}\left(  R_{\theta_{j}}\right)  $ for
$1\leq j\leq k,$ $D\in\mathcal{P}_{\mathbb{R}}\left(  B\right)  ,$ and
$E\in\mathcal{P}_{\mathbb{R}}\left(  C\right)  .$ If we choose separating
vectors $v_{j}$ for each $\mathcal{P}_{\mathbb{R}}\left(  R_{\theta_{j}%
}\right)  $ $\left(  1\leq j\leq k\right)  $ and $w_{1}$ for $\mathcal{P}%
_{\mathbb{R}}\left(  B\right)  $ and $w_{2}$ for $\mathcal{P}_{\mathbb{R}%
}\left(  C\right)  ,$ and we let $\eta=v_{1}\oplus\cdots\oplus v_{k}\oplus
w_{1}\oplus w_{2}$, then there is a sequence $\left\{  q_{n}\right\}  $ of
nonnegative integers and a sequence $\left\{  t_{n}\right\}  $ in $\mathbb{R}$
such that
\[
t_{n}T^{q_{n}}\eta\rightarrow S\eta,
\]
and it follows that
\[
t_{n}T^{q_{n}}\rightarrow S.
\]
Thus $S\in\mathbb{R}$-$\mathrm{Orb}\left(  T\right)  ^{-SOT}.$

$\left(  2\right)  \Longrightarrow\left(  1\right)  $. Suppose $\left(
2\right)  $ is true, let $e$ be a separating vector for $\mathcal{P}%
_{\mathbb{R}}\left(  T\right)  $, and suppose $S\in\mathrm{\mathrm{OrbRef}%
}\left(  T\right)  \subseteq\mathbb{R}$-$\mathrm{OrbRef}\left(  T\right)
=\mathbb{R}$-$\mathrm{Orb}\left(  T\right)  \subseteq\mathcal{P}_{\mathbb{R}%
}\left(  T\right)  $ (by $\left(  2\right)  $). Since there is a sequence
$\left\{  m_{n}\right\}  $ of nonnegative integers such that $T^{m_{n}%
}e\rightarrow Se$, it follows that $T^{m_{n}}\rightarrow S$. Hence $\left(
1\right)  $ is proved.
\end{proof}

\bigskip

\begin{theorem}
\label{or}A matrix $T\in\mathcal{M}_{N}\left(  \mathbb{R}\right)  $ fails to
be orbit reflexive if and only if it is similar to a matrix of the form in
Lemma \ref{hard} that is not orbit reflexive.
\end{theorem}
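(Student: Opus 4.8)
The plan is to prove the two implications separately, with essentially all of the content concentrated in one auxiliary statement about matrices whose orbit is unbounded. The ``if'' direction I would get from the fact that orbit reflexivity is a similarity invariant: if $T=VT'V^{-1}$, then $\mathrm{Orb}(T)=V\,\mathrm{Orb}(T')\,V^{-1}$, hence $\overline{\mathrm{Orb}(T)}=V\,\overline{\mathrm{Orb}(T')}\,V^{-1}$, and $\overline{\mathrm{Orb}(T)\,(Vx)}=V\,\overline{\mathrm{Orb}(T')\,x}$ for every $x$, so $S\mapsto V^{-1}SV$ carries $\mathrm{OrbRef}(T)$ onto $\mathrm{OrbRef}(T')$. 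Thus if $T$ is similar to a matrix of the form in Lemma~\ref{hard} that is not orbit reflexive, then $T$ is not orbit reflexive.

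For the converse, suppose $T$ is not orbit reflexive. By the theorem of M\"uller and Vr\v{s}ovsk\'y quoted in the introduction, $r(T)=1$. Using the real Jordan form, write $T\sim T_1\oplus T_0$, where $T_1$ collects the Jordan blocks for the unimodular eigenvalues and $T_0$ the rest, so $r(T_0)<1$ and $T_1$ is present. The key claim below shows that, since $T$ is not orbit reflexive, no Jordan block of $T_1$ can have size $\ge 2$; so $T_1\sim R_{\theta_1}\oplus\cdots\oplus R_{\theta_k}\oplus B$, where the $R_{\theta_j}$ are the $2\times2$ rotation blocks from the non-real unimodular eigenvalue pairs $e^{\pm i\theta_j}$ and $B$ is the $\pm1$-diagonal matrix from the real eigenvalues $\pm1$, so $B^2=I$ (missing summands and the small cases $k\le 1$ are handled by Corollaries~\ref{power} and~\ref{help}). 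Hence $T$ is similar to a matrix of the form in Lemma~\ref{hard}, and by the ``if'' direction that matrix is not orbit reflexive either.

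The one real step is the \emph{key claim: if $r(T)=1$ and some unimodular eigenvalue of $T$ has a Jordan block of size $\ge 2$, then $T$ is orbit reflexive}. Let $W=\{v:\sup_n\|T^nv\|<\infty\}$. Decomposing a vector over the generalized eigenspaces and noting that a unimodular $\lambda$-component contributes an unbounded (hence $\to\infty$) term exactly when it is not annihilated by the nilpotent part, one checks that $W$ is a \emph{proper} subspace (this is where the size-$\ge2$ block enters), that $v\notin W\Rightarrow\|T^nv\|\to\infty$, and that $n\mapsto T^nv$ is injective for $v\notin W$; moreover there is a smaller proper subspace $W'\supseteq W$ (vectors whose unimodular components die under the top nilpotent power) with $\|T^nv\|\asymp\|T^n\|$ for $v\notin W'$. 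Since the non-separating vectors of $\mathcal{P}_{\mathbb{R}}(T)$ together with $W'$ form a finite union of proper subspaces, Lemma~\ref{sub} lets me pick $v_0\notin W'$ that is separating for $\mathcal{P}_{\mathbb{R}}(T)$. Now take $S$ with $Sx\in\overline{\mathrm{Orb}(T)x}$ for all $x$. Since $\|T^nv_0\|\to\infty$, the local orbit $\mathrm{Orb}(T)v_0$ is closed, so $Sv_0=T^{q_0}v_0$ for a unique $q_0\ge0$. Fix any $x$; for all but one $t\in\mathbb{R}$ the vector $x+tv_0$ lies off $W$, so $S(x+tv_0)=T^{m(t)}(x+tv_0)$ for a unique $m(t)$, and equating this with $Sx+tSv_0$ gives
\[
Sx-T^{m(t)}x=t\,\big(T^{m(t)}-T^{q_0}\big)v_0 .
\]
If $Sx\ne T^{q_0}x$, then $m(t)\ne q_0$ for every admissible $t$, so $(T^{m(t)}-T^{q_0})v_0\ne0$ since $v_0$ is separating, and the displayed identity forces $t\mapsto m(t)$ to be injective; but taking norms in $S(x+tv_0)=T^{m(t)}(x+tv_0)$ and using $\|T^{m(t)}x\|\le\|x\|\,\|T^{m(t)}\|\asymp\|x\|\,\|T^{m(t)}v_0\|$ shows $\|T^{m(t)}v_0\|$ stays bounded as $|t|\to\infty$, which (as $\|T^nv_0\|\to\infty$) confines $m(t)$ to a finite set for large $|t|$, contradicting injectivity. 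Hence $Sx=T^{q_0}x$ for all $x$, i.e.\ $S=T^{q_0}\in\overline{\mathrm{Orb}(T)}$, so $T$ is orbit reflexive.

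The main obstacle is precisely this last argument, and the delicate point is the choice of $v_0$: it must avoid not merely $W$ but the thinner set $W'$, so that $\|T^nv_0\|$ grows at the full rate $\|T^n\|$ — that is what lets the ``bounded norm forces finitely many exponents'' estimate override the injectivity of $m(\cdot)$. Everything else is bookkeeping with the real Jordan form and the similarity-invariance of orbit reflexivity.
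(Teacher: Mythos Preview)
Your argument is correct, and the overall architecture matches the paper's: reduce to $r(T)=1$, then show that any size-$\ge 2$ Jordan block at a unimodular eigenvalue already forces orbit reflexivity, leaving only matrices of the form in Lemma~\ref{hard}. The difference is entirely in how the key claim is established.

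The paper dispatches the key claim in one line by invoking \cite[Lemma~17]{HIY}: if either of the sets $\{x:T^kx\to0\}$ or $\{x:\|T^kx\|\to\infty\}$ fails to be a countable union of nowhere dense sets, then $T$ is orbit reflexive. When $r(T)=1$ and some unimodular block has size $\ge 2$, the second set is the complement of a proper subspace and hence has nonempty interior; the cases $r(T)<1$ and $r(T)>1$ fall to the same lemma. Your route avoids this external citation and instead builds a direct contradiction: pick a separating vector $v_0$ whose orbit grows at the full rate $\|T^n\|$, use closedness of its orbit to pin down $Sv_0=T^{q_0}v_0$, and then, for arbitrary $x$, run the line $x+tv_0$ to force $S=T^{q_0}$ via the injectivity/boundedness tension on $m(t)$. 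This is longer but self-contained within the paper (it uses only Lemma~\ref{sub} and the separating vector from \cite{HK1}), and it makes explicit why the \emph{maximal} growth rate of $\|T^nv_0\|$ is needed --- your remark about $W'$ versus $W$ is exactly the right observation, and it is the analogue of what \cite[Lemma~17]{HIY} is doing behind the scenes. (One wording slip: you call $W'$ ``smaller'' while writing $W'\supseteq W$; you mean the larger proper subspace.) For the reduction to $r(T)=1$ you quote M\"uller--Vr\v{s}ovsk\'y rather than \cite{HIY}; either reference is fine. The boundary cases $k=0$ and the absence of the $B$ or $C$ summand are handled somewhat informally both here and in the paper; your appeal to Corollaries~\ref{power} and~\ref{help} covers $\mathbb{R}$-orbit reflexivity directly, and orbit reflexivity then follows since in those cases each local orbit closure is easily identified.
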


\begin{proof}
We know from \cite[Lemma 17]{HIY} that if one of the sets $\left\{
x\in\mathbb{R}^{N}:T^{k}x\rightarrow0\right\}  $ or $\left\{  x\in
\mathbb{R}^{N}:\left\Vert T^{k}x\right\Vert \rightarrow\infty\right\}  $ is
not a countable union of nowhere dense subsets of $\mathbb{R}^{N},$ then $T$
is orbit reflexive. Thus if $r\left(  T\right)  <1,$ then $T$ is orbit
reflexive. If $r\left(  T\right)  >1,$ then the Jordan form shows that
$\left\{  x\in\mathbb{R}^{N}:\left\Vert T^{k}x\right\Vert \rightarrow
\infty\right\}  $ has nonempty interior, which implies $T$ is orbit reflexive.
Hence we are left with the case where $r\left(  T\right)  =1$. Moreover, if
the Jordan form of $T$ has an $m\times m$ block of the form $\left(
\begin{array}
[c]{cccc}%
A & I_{2} & \cdots & 0\\
0 & A & \ddots & \vdots\\
\vdots & 0 & \ddots & I_{2}\\
0 & \cdots & 0 & A
\end{array}
\right)  $ with $A=\pm I$ or $A=R_{\theta}$, then for any vector
$x\in\mathbb{R}^{N}$ whose $m^{th}$-coordinate relative to this summand is
nonzero, we have $\left\Vert T^{k}x\right\Vert \rightarrow\infty;$ whence $T$
is orbit reflexive. Thus the Jordan form of a matrix that is not orbit
reflexive must be as the matrix in Lemma \ref{hard}.
\end{proof}

\bigskip

If $X$ is a Banach space over $\mathbb{R}$, and $T\in B\left(  \mathbb{R}%
\right)  $ is algebraic, i.e., there is a nonzero polynomial $p\in
\mathbb{R}\left[  x\right]  $ such that $p\left(  T\right)  =0,$ then, as a
linear transformation, $T$ has a Jordan form with finitely many distinct
blocks, but possibly with some of the blocks having infinite multiplicity.

\bigskip

\begin{corollary}
\label{coror}Suppose $X$ is a Banach space over $\mathbb{R}$ and $T\in
B\left(  X\right)  $ is algebraic. Then $T$ fails to be orbit-reflexive if and
only if $r\left(  T\right)  =1,$ and the Jordan form for $T$ has one block
$R_{\theta_{1}}$ of multiplicity $1$, other blocks of the form $R_{\theta_{2}%
},\ldots,R_{\theta_{k}}$ with $\theta_{1}/2\pi\notin sp_{\mathbb{Q}}\left\{
1,\theta_{2},\ldots,\theta_{k}\right\}  $, the remaining blocks of the form
$\pm I$ or blocks with spectral radius less than $1$.
\end{corollary}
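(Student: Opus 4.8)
The plan is to reduce the infinite-dimensional algebraic case to the finite-dimensional Theorem~\ref{or} by exploiting the fact that an algebraic operator has only finitely many distinct Jordan block \emph{types}, even though each may occur with infinite multiplicity. First I would argue that the problematic summands — blocks of the form $\pm I$ or $R_\theta$, and blocks of spectral radius $\neq 1$ — behave the same way in infinite multiplicity as in finite multiplicity for the purposes of orbit reflexivity, so that the dichotomy really only depends on the finite list of block types that appear.

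The easy direction is the ``if'': suppose $T$ has the stated form. Then $T = R_{\theta_1} \oplus R_{\theta_2} \oplus \cdots \oplus R_{\theta_k} \oplus B \oplus C$ where $B$ is a (possibly infinite) direct sum of copies of $\pm I$ with $B^2 = I$ and $r(C) < 1$, and condition $(4)$ of Lemma~\ref{hard} fails because $\theta_1/2\pi \notin sp_{\mathbb{Q}}\{1,\theta_2,\ldots,\theta_k\}$. The argument in $(1)\Longrightarrow(4)$ of Lemma~\ref{hard} constructs the operator $S = F \oplus I \oplus \cdots \oplus I \oplus P$ (with $F$ the flip on $\mathbb{R}^2$ and $P = I \oplus 0$ the SOT-limit of $(B\oplus C)^{2dm_n}$) and shows $S \in \mathrm{OrbRef}(T)$ but $ST \neq TS$; that construction uses only the density/equidistribution Lemma~\ref{ind} applied to finitely many angles and the fact that $\|B^k\| $ is bounded while $\|C^k\| \to 0$, none of which is sensitive to multiplicities, so it goes through verbatim in the Banach-space setting. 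Hence $T$ is not orbit reflexive.

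For the ``only if'' I would prove the contrapositive: if $T$ is algebraic and does \emph{not} have the stated form, then $T$ is orbit reflexive. As in Theorem~\ref{or}, if $r(T) \neq 1$ then $\{x : \|T^k x\| \to \infty\}$ (when $r(T)>1$) has nonempty interior, or $\{x : T^k x \to 0\}$ is everything (when $r(T)<1$), and by \cite[Lemma 17]{HIY} — which is a Banach-space statement, so applies — $T$ is orbit reflexive. So assume $r(T)=1$. If the Jordan form has any block $J_m(A)$ with $m \geq 2$ and $A = \pm I$ or $A = R_\theta$, then vectors with nonzero top coordinate in that block have $\|T^k x\| \to \infty$, giving an open set on which the norms blow up, hence orbit reflexivity again. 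This leaves $T$ of the form $R_{\theta_1} \oplus \cdots \oplus R_{\theta_k} \oplus B \oplus C$ with $B^2 = I$, $r(C) < 1$, where now each summand may have infinite multiplicity and $\theta_1,\ldots,\theta_k$ are the \emph{distinct} rotation angles appearing. If every $\theta_j/2\pi$ lies in $sp_{\mathbb{Q}}(\{1\} \cup \{\theta_i/2\pi : i \neq j\})$, then condition $(3)$ of Lemma~\ref{hard} holds for these distinct angles; I expect the orbit-reflexivity argument of $(3)\Longrightarrow(2)\Longrightarrow(1)$ in Lemma~\ref{hard} to carry over to infinite multiplicity because it only used: a separating vector for $\mathcal{P}_{\mathbb{R}}$ of each factor (which exists for algebraic operators, cf. \cite{HK1} as cited in Theorem~\ref{alg}), Lemma~\ref{sub} applied on each finite-dimensional block-type eigenspace, and boundedness of $\{\|T^n\|\}$.

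The main obstacle will be the bookkeeping for infinite multiplicities in the $(3)\Longrightarrow(1)$ direction: when a rotation angle $\theta_j$ occurs with infinite multiplicity, one cannot directly invoke the finite-dimensional Lemma~\ref{sub} on the whole block, so I would instead observe that any $S \in \mathrm{OrbRef}(T)$ restricted to a single two-dimensional coordinate copy of $R_{\theta_j}$ lands in $\mathcal{P}_{\mathbb{R}}(R_{\theta_j})$ by the finite-dimensional argument, and then use a separating-vector argument (a vector with suitable components in finitely many coordinate copies, as in the proof of Theorem~\ref{alg}) to pin down $S$ globally as a scalar times a power of $T$. I would also need the minor point that an algebraic $T$ with $r(T)=1$ automatically has its rotation blocks satisfying the no-large-Jordan-block condition once we have excluded it above, and that ``$\theta_1/2\pi \notin sp_{\mathbb{Q}}\{1,\theta_2,\ldots,\theta_k\}$ for some relabeling'' is exactly the negation of condition $(4)$ of Lemma~\ref{hard}; that equivalence is the routine translation already noted at the start of the proof of Lemma~\ref{hard}.
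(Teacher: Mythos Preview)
Your ``if'' direction is essentially fine and matches the paper's construction of the witness $S = F \oplus I \oplus 0$; the paper phrases even this direction via restriction to finite-dimensional invariant subspaces, but your direct verification works just as well since the convergence $R_{\theta_j}^{2dm_n} \to I$ and $C^{2dm_n} \to 0$ is insensitive to multiplicity for an algebraic operator.

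The ``only if'' direction, however, misses the paper's key simplification and in its current form has real gaps. You try to rerun the entire machinery of Theorem~\ref{or} and Lemma~\ref{hard} in infinite dimensions: invoking \cite[Lemma 17]{HIY} as a Banach-space statement (which you assert without justification), redoing the $(3)\Rightarrow(2)\Rightarrow(1)$ argument with infinite-multiplicity blocks, and handling the bookkeeping you yourself flag as ``the main obstacle.'' None of this is needed. The paper's observation is that since $T$ is algebraic, \emph{every finite set of vectors lies in a finite-dimensional $T$-invariant subspace} $E$ (take the span of their cyclic subspaces, each of dimension at most $\deg(\text{minimal polynomial})$), and one can always enlarge $E$ to include enough Jordan block types so that $T|_E$ itself fails the bad form and is therefore orbit reflexive by Theorem~\ref{or}. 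Then for $S \in \mathrm{OrbRef}(T)$, $x_1,\ldots,x_n$, and $\varepsilon>0$, one has $S|_E \in \mathrm{OrbRef}(T|_E)$, hence there is an $m$ with $\|Sx_j - T^m x_j\| < \varepsilon$ for all $j$, placing $S$ in the SOT-closure of $\mathrm{Orb}(T)$.

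This single reduction replaces your entire case analysis ($r(T)\neq 1$, large Jordan blocks, infinite-multiplicity rotations) with one line, and it sidesteps every issue you raised: no Banach-space version of \cite[Lemma 17]{HIY} is required, no separating-vector argument across infinitely many coordinate copies, and no reworking of Lemma~\ref{sub}. Your plan is not unsalvageable, but as written it leaves the hardest step (``I expect the argument to carry over'') as a hope rather than a proof, and even a successful execution would be substantially longer than the paper's two-paragraph argument.
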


\begin{proof}
Suppose $T$ has the indicated form.Then there is an invertible operator $D\in
B\left(  X\right)  $ such that $D^{-1}TD=R_{\theta_{1}}\oplus A\oplus B$ with
$r\left(  A\right)  =1$ and $r\left(  B\right)  <1.$ Let $S=F\oplus1\oplus0$.
Suppose $x\in X$. Choose a finite-dimensional invariant subspace $M$ for $T$
of the form $M=M_{1}\oplus M_{2}\oplus M_{3}$, with $M_{1}$ equal to the
domain of $S_{1}$ such that $x\in M$. It follows from the assumptions on $T$
and the proof of Theorem \ref{or} that $S|M\in\mathrm{OrbRef}\left(
T|M\right)  .$ In particular, $Sx$ is in the closure $\mathrm{Orb}\left(
T\right)  x$. Thus $S\in\mathrm{OrbRef}\left(  T\right)  ,$ but $ST\neq TS,$
so $T$ is not orbit reflexive.

On the other hand, if $T$ does not have the described form, then, given
$S\in\mathrm{OrbRef}\left(  T\right)  $, vectors $x_{1},\ldots,x_{n}$ and
$\varepsilon>0,$ there is a finite-dimensional invariant subspace $E$ of $X$
containing $x_{1},\ldots,x_{n}$ such that $T|E$ is orbit reflexive because of
the conditions in Theorem \ref{or}. Hence, since $S|E\in\mathrm{OrbRef}\left(
T|E\right)  $, there is an integer $m\geq0$ such that
\[
\left\Vert Sx_{j}-T^{m}x_{j}\right\Vert <\varepsilon
\]
for $1\leq j\leq n$. Thus $S$ is in the strong operator closure of
$\mathrm{Orb}\left(  T\right)  .$ Thus $T$ is orbit reflexive.
\end{proof}

\bigskip

\begin{theorem}
A matrix $T\in\mathcal{M}_{N}\left(  \mathbb{R}\right)  $ fails to be
$\mathbb{R}$-orbit reflexive if and only if $r\left(  T\right)  \neq0$ with
the largest size of a Jordan block with spectral radius $r\left(  T\right)  $
being $m,$ and either

\begin{enumerate}
\item every Jordan block of $T$ with spectral radius $r\left(  T\right)  $
splits over $\mathbb{R}$, and the largest two such blocks differ in size by
more than $1,$ or

\item there exist $k\in\mathbb{N}$, $\theta_{1},\ldots,\theta_{k}\in
\lbrack0,2\pi)$ such that the direct sum of the non-splitting $m\times m$
Jordan blocks of $T/r\left(  T\right)  $ that have spectral radius $1$ is
similar to%
\[
J_{m}\left(  R_{\theta_{1}}\right)  \oplus\cdots\oplus J_{m}\left(
R_{\theta_{k}}\right)
\]
with $\theta_{1}/2\pi\notin sp_{\mathbb{Q}}\left\{  1,\theta_{2}/2\pi
,\ldots,\theta_{k}/2\pi\right\}  $.
\end{enumerate}
\end{theorem}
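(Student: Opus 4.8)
Throughout I would use three elementary facts. Since every $\lambda T^{k}$ is a real polynomial in $T$, $\mathbb{R}$-$\mathrm{Orb}(T)\subseteq\mathcal{P}_{\mathbb{R}}(T)$, so $\overline{\mathbb{R}\text{-}\mathrm{Orb}(T)}$ lies in the finite-dimensional (hence closed) algebra $\mathcal{P}_{\mathbb{R}}(T)\subseteq\{T\}'$; in particular any $S$ with $ST\neq TS$ witnesses the failure of $\mathbb{R}$-orbit reflexivity. Second, $\mathbb{R}$-$\mathrm{OrbRef}(T)\subseteq\mathrm{AlgLat}(T)$. Third, if $S\in\mathbb{R}$-$\mathrm{OrbRef}(T)$ happens to lie in $\mathcal{P}_{\mathbb{R}}(T)$ and $e$ separates $\mathcal{P}_{\mathbb{R}}(T)$, then since $Q\mapsto Qe$ is injective and linear on the finite-dimensional space $\mathcal{P}_{\mathbb{R}}(T)$ and $Se\in\overline{\mathbb{R}\text{-}\mathrm{Orb}(T)e}$, a sequence $\lambda_{n}T^{k_{n}}e\to Se$ forces $\lambda_{n}T^{k_{n}}\to S$, so $S\in\overline{\mathbb{R}\text{-}\mathrm{Orb}(T)}$. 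For the reductions: if $r(T)=0$ then $T$ is nilpotent, hence $\mathbb{R}$-orbit reflexive by \cite{HIMY}, which matches the stated condition requiring $r(T)\neq0$; since $\mathbb{R}$-$\mathrm{Orb}(cT)=\mathbb{R}$-$\mathrm{Orb}(T)$ for $c\neq 0$ and $\mathbb{R}$-orbit reflexivity is a similarity invariant, I may replace $T$ by $T/r(T)$ and put it in real Jordan form $T=A\oplus C$, where $C$ collects the blocks of spectral radius $<1$ (so $r(C)<1$) and $A$ is a direct sum of blocks $J_{\ell}(R_{\theta})$ of spectral radius $1$; write $m$ for the largest block size in $A$ and $A=A_{m}\oplus A'$ for the splitting into size-$m$ blocks and the rest. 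Because $\sigma(A)\cap\sigma(C)=\varnothing$, the Chinese Remainder Theorem makes every operator $p_{A}(A)\oplus p_{C}(C)$ a single polynomial in $T$, so in view of the third fact it suffices, for the reflexive direction, to prove $S_{A}\in\mathcal{P}_{\mathbb{R}}(A)$ and $S_{C}\in\mathcal{P}_{\mathbb{R}}(C)$ whenever $S=S_{A}\oplus S_{C}\in\mathbb{R}$-$\mathrm{OrbRef}(T)$.

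For the direction that $T$ fails to be $\mathbb{R}$-orbit reflexive, suppose first that condition (1) holds: every spectral-radius-$1$ block of $A$ splits and the two largest have sizes $m>\ell+1$; then there is a unique size-$m$ block, which I may take to be $J_{m}(1)$, and all other blocks of $A$ have size $\le\ell$. I take $S$ to act as the \emph{truncated shift} $E_{1,\ell+1}$ on that block and as $0$ on everything else, and verify $Sx\in\overline{\mathbb{R}\text{-}\mathrm{Orb}(T)x}$ for every $x$ by cases on the depth of the top-block component of $x$ (when that depth is at least $\ell$ the top block dominates, using $C^{k}\to0$ and that $\binom{k}{m-1}$ dominates the lower binomials, and otherwise $Sx=0$); since $[J_{m}(1),E_{1,\ell+1}]=-E_{1,\ell+2}\neq0$, this $S$ is a witness. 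If instead condition (2) holds, I lift the non-reflexivity construction from the proof of Lemma \ref{hard} by the rank-one matrix $N_{m}^{m-1}$: on the size-$m$ rotation blocks $J_{m}(R_{\theta_{1}})\oplus\cdots\oplus J_{m}(R_{\theta_{k}})$ let $S$ act as $(N_{m}^{m-1}\otimes F)\oplus(N_{m}^{m-1}\otimes I)\oplus\cdots\oplus(N_{m}^{m-1}\otimes I)$ with $F=\bigl(\begin{smallmatrix}0&1\\1&0\end{smallmatrix}\bigr)$, as $\pm N_{m}^{m-1}$ on the size-$m$ splitting blocks, and as $0$ on all smaller blocks and on $C$. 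Since $\theta_{1}/2\pi\notin sp_{\mathbb{Q}}\{1,\theta_{2}/2\pi,\ldots,\theta_{k}/2\pi\}$, the argument of Lemma \ref{hard} together with Lemma \ref{ind} produces $q_{n}\to\infty$ with $q_{n}-m+1$ even along which $R_{\theta_{1}}^{q_{n}-m+1}$ approaches any prescribed rotation while $R_{\theta_{j}}^{q_{n}-m+1}\to I$ for $j\geq2$; using that every $v\in\mathbb{R}^{2}$ satisfies $Fv=R_{\alpha(v)}v$ and that $N_{m}^{m-1}$ has rank one, this gives $\binom{q_{n}}{m-1}^{-1}T^{q_{n}}x\to Sx$ for each $x$, so $S\in\mathrm{OrbRef}(T)\subseteq\mathbb{R}$-$\mathrm{OrbRef}(T)$, while $[J_{m}(R_{\theta_{1}}),N_{m}^{m-1}\otimes F]=N_{m}^{m-1}\otimes[R_{\theta_{1}},F]\neq0$ (as $\sin\theta_{1}\neq0$), so again $S$ is a witness.

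For the converse, assume both (1) and (2) fail and take $S=S_{A}\oplus S_{C}\in\mathbb{R}$-$\mathrm{OrbRef}(T)$. Testing $S$ on vectors $x_{0}\oplus y$, where $x_{0}$ is a separating vector for $\mathcal{P}_{\mathbb{R}}(A)$ (so $\|A^{k}x_{0}\|$ is bounded away from $0$, forcing the relevant scalars to be bounded), and running a Baire category argument as in the proof of Corollary \ref{help} forces $S_{C}\in\mathbb{R}$-$\mathrm{Orb}(C)\subseteq\mathcal{P}_{\mathbb{R}}(C)$; and restricting $S$ to $x\oplus 0$ gives $S_{A}\in\mathbb{R}$-$\mathrm{OrbRef}(A)$.

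The main obstacle is to show $S_{A}\in\mathcal{P}_{\mathbb{R}}(A)$. Normalizing $A^{k}$ by $\binom{k}{m-1}$ concentrates all the growth on $A_{m}$, so for each $x$ the only way $S_{A}x=\lim\lambda_{n}A^{k_{n}}x$ can occur with $\{k_{n}\}$ unbounded is with $\lambda_{n}\asymp\binom{k_{n}}{m-1}^{-1}$, and then the limit points are governed by the closure of $\{(R_{\theta_{i}}^{k})_{i}\}$ taken over the size-$m$ blocks. Failure of (2) means, via the equivalence $(3)\Leftrightarrow(4)$ of Lemma \ref{hard}, that the angles of the non-splitting size-$m$ blocks satisfy a nontrivial relation $\sum s_{j}\theta_{j}\in2\pi\mathbb{Z}$, so by Lemma \ref{ind} that closure is a proper subtorus, which pins the \emph{rotation part} of $S_{A}$ into the range of $\mathcal{P}_{\mathbb{R}}$; failure of (1)---either the two largest sizes differ by at most $1$, or some size-$m$ block is non-splitting---is exactly what blocks the truncated-shift mechanism of the first construction and forces the \emph{nilpotent part} of $S_{A}$ onto a single column. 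Feeding these constraints into Lemma \ref{sub}, applied to the kernels $\ker(S_{A}-\gamma Q)$ as $\gamma$ ranges over a countable set and $Q$ over the finitely many candidate operators (as in the proofs above), forces $S_{A}=p_{A}(A)$. With $S_{A}\in\mathcal{P}_{\mathbb{R}}(A)$ and $S_{C}\in\mathcal{P}_{\mathbb{R}}(C)$ in hand, the Chinese Remainder Theorem gives $S\in\mathcal{P}_{\mathbb{R}}(T)$, and the separating-vector fact from the first paragraph gives $S\in\overline{\mathbb{R}\text{-}\mathrm{Orb}(T)}$, completing the proof.
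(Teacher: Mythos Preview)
Your forward direction (building a witness $S$ when (1) or (2) holds) matches the paper: the paper cites \cite{HIMY} for (1) and carries out essentially your $N_m^{m-1}\otimes F$ construction for (2), using $\binom{s_d}{m-1}^{-1}T^{s_d}$. Your CRT/separating-vector wrapper for the converse is sound, and the treatment of $S_C$ via the Corollary~\ref{help} mechanism is fine.

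The gap is the core of the converse: showing $S_A\in\mathcal{P}_{\mathbb{R}}(A)$. What you wrote is a heuristic, not an argument. Saying ``the closure is a proper subtorus, which pins the rotation part of $S_A$ into the range of $\mathcal{P}_{\mathbb{R}}$'' does not explain why the constraints on $S_A$ become \emph{countable} so that Lemma~\ref{sub} applies; a proper subtorus still has a continuum of points. The paper obtains countability by a three-case split on the value of $S$ at a single test vector $X=(0,\ldots,0,x)^{t}$ in the domain of $J_m(R_{\theta_1})$. In Case~2 ($SX=\lambda_0T^{n_0}X\neq0$) the ratio $\|x_{m-1}\|/\|x_m\|$ reads off the exponent exactly, forcing $m_n\equiv n_0$ eventually and hence $S=\lambda_0T^{n_0}$; your sketch has no analogue of this rigidity step. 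In Case~3 ($SX\neq0$ with vanishing bottom coordinate) the paper shows that on every size-$m$ block the operator matrix of $S$ is supported in the $(1,m)$ position, and then---this is the key move you are missing---observes that the tuple $A_1\oplus\cdots\oplus A_{k+2}$ of those $(1,m)$ entries lies in $\mathbb{R}\text{-}\mathrm{OrbRef}\bigl(R_{\theta_1}\oplus\cdots\oplus R_{\theta_k}\oplus I_a\oplus(-I_b)\bigr)$, so the already-proved $m=1$ case (Lemma~\ref{hard}) supplies $\lambda_n$ and $s_n\to\infty$ with $\lambda_n(R_{\theta_1}\oplus\cdots)^{s_n-m+1}\to A_1\oplus\cdots$, which upgrades to $\lambda_nT^{s_n}\to S$. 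Your outline never reduces to the $m=1$ case; the integer relation $\sum s_j\theta_j\in2\pi\mathbb{Z}$ is used in the paper not to say ``proper subtorus'' but to force each candidate angle into a fixed coset of $\tfrac{\pi}{s_1}\mathbb{Z}$, which is where the countability for Lemma~\ref{sub} actually comes from. Finally, Case~1 ($SX=0$) with $k=1$ and $\theta_1/2\pi\in\mathbb{Q}$ needs its own argument (the paper uses three carefully chosen arguments $\alpha<\beta<\gamma$ in $\mathbb{C}\cong\mathbb{R}^2$ to separate orbits), which your sketch does not cover.
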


\begin{proof}
We know that if $r\left(  T\right)  =0,$ then $T$ is nilpotent, which, by
Corollary \ref{power}, implies $T$ is $\mathbb{R}$-orbit reflexive. Hence we
can assume that $r\left(  T\right)  >0.$ Replacing $T$ by $T/r\left(
T\right)  ,$ we can, and do, assume $r\left(  T\right)  =1.$

In the case where every Jordan block of $T$ with spectral radius $r\left(
T\right)  $ splits, the proof that $T$ is not $\mathbb{R}$-orbit reflexive is
equivalent to the condition in $\left(  1\right)  $ is exactly the same at the
proof of Theorem 7 in \cite{HIMY}.

Next suppose $T$ satisfies $\left(  2\right)  $. Then, as in the proof of
$\left(  1\right)  \Longrightarrow\left(  4\right)  $ in Lemma \ref{hard},
given $\alpha\in\lbrack0,2\pi),$ we can choose a sequence $\left\{
s_{d}\right\}  $ of positive integers converging to $\infty$ such that
$s_{d}-m+1$ is even for each $d\geq1$ and such that $R_{\theta_{1}}%
^{s_{d}-m+1}\rightarrow R_{\alpha}$ and $R_{\theta_{j}}^{s_{d}-m+1}\rightarrow
I$ for $2\leq j\leq k$. It follows that%
\[
\frac{1}{\binom{s_{d}}{m-1}}J_{m}^{s_{d}}\left(  R_{\theta_{1}}\right)
\rightarrow\left(
\begin{array}
[c]{cccc}%
0 & \cdots & 0 & R_{\alpha}\\
0 & 0 & \cdots & \vdots\\
\vdots & \vdots & \ddots & 0\\
0 & 0 & \cdots & 0
\end{array}
\right)
\]
and for any of the other splitting or non-splitting $m\times m$ Jordan block
$J$ with $r\left(  J\right)  =1,$ we have%
\[
\frac{1}{\binom{s_{d}}{m-1}}J^{s_{d}}\rightarrow\left(
\begin{array}
[c]{cccc}%
0 & \cdots & 0 & I\\
0 & 0 & \cdots & \vdots\\
\vdots & \vdots & \ddots & 0\\
0 & 0 & \cdots & 0
\end{array}
\right)  .
\]
For any block $J$ with $r\left(  J\right)  <1$ or with size smaller than
$m\times m$, we have%
\[
\frac{1}{\binom{s_{d}}{m-1}}J^{s_{d}}\rightarrow0.
\]
Arguing as in the proof of $\left(  1\right)  \Longrightarrow\left(  4\right)
$ in Lemma \ref{hard}, we see that, if $F$ is the flip matrix, and $S$ is the
matrix that is $\left(
\begin{array}
[c]{cccc}%
0 & \cdots & 0 & F\\
0 & 0 & \cdots & \vdots\\
\vdots & \vdots & \ddots & 0\\
0 & 0 & \cdots & 0
\end{array}
\right)  $ on the domain of $J_{m}\left(  R_{\theta_{1}}\right)  $, $\left(
\begin{array}
[c]{cccc}%
0 & \cdots & 0 & I\\
0 & 0 & \cdots & \vdots\\
\vdots & \vdots & \ddots & 0\\
0 & 0 & \cdots & 0
\end{array}
\right)  $ on the domains of each of the remaining $m\times m$ blocks $J$ with
$r\left(  J\right)  =1,$ and $0$ on the domains of the remaining blocks, then
$S\in\mathbb{R}$-$\mathrm{OrbRef}\left(  T\right)  ,$ but $ST\neq TS.$ Hence
$T$ is not $\mathbb{R}$-orbit reflexive.

We need to show that if $\left(  2\right)  $ holds with the condition on
$\theta_{1}$ replaced with condition $\left(  3\right)  $ in Lemma \ref{hard},
then $T$ must be $\mathbb{R}$-orbit reflexive. If $m=1$, then $T$ has the form
as in Lemma \ref{hard}, so we can assume that $m>1.$ Suppose $S\in\mathbb{R}%
$-$\mathrm{OrbRef}\left(  T\right)  $ and $0\neq\left(
\begin{array}
[c]{c}%
0\\
0\\
\vdots\\
x
\end{array}
\right)  =X$ is in the domain of $J_{m}\left(  R_{\theta_{1}}\right)  $. We
consider three cases.

\textbf{Case 1}. $S_{1}\left(  X\right)  =S\left(  X\right)  =0$, where
$S_{1}$ is the restriction of $S$ to the domain of $J_{m}\left(  R_{\theta
_{1}}\right)  $. Suppose $Y$ is orthogonal to the domain of $J_{m}\left(
R_{\theta_{1}}\right)  ,$ and using the fact that there is a sequence
$\left\{  m_{n}\right\}  $ of nonnegative integers and a sequence $\left\{
\lambda_{n}\right\}  $ in $\mathbb{R}$ such that
\[
S\left(  X+Y\right)  =\lim_{n\rightarrow\infty}\lambda_{n}T^{m_{n}}\left(
X+Y\right)  ,
\]
which means that
\[
0=S\left(  X\right)  =\lim_{n\rightarrow\infty}\lambda_{n}T^{m_{n}}\left(
X\right)  ,
\]
and
\[
S\left(  Y\right)  =\lim_{n\rightarrow\infty}\lambda_{n}T^{m_{n}}\left(
Y\right)  .
\]
However, the former implies
\[
\lim_{n\rightarrow\infty}\left\vert \lambda_{n}\right\vert \binom{m_{n}}%
{m-1}=0,
\]
which implies $S\left(  Y\right)  =0.$ If $k\geq2,$ then $S_{2}=0,$ where
$S_{2}$ is the restriction of $S$ to the domain of $J_{m}\left(  R_{\theta
_{2}}\right)  ,$ so the preceding arguments imply that $S_{1}=0;$ whence,
$S=0.$

We therefore suppose $k=1,$ and it follows from $\left(  3\right)  $ that
$\theta_{1}/2\pi\in\mathbb{Q}$, i.e., $\theta_{1}=2\pi p/q$ with $1\leq p<q$
relatively prime integers. We can identify $\mathbb{R}^{2}$ with $\mathbb{C}$,
and we can write $x=re^{\alpha}$ with $r>0.$ Since $S\left(  X\right)  =0,$ we
have $S\left(  \frac{1}{r}X\right)  =0,$ so we can assume $x=e^{i\alpha}$.
Then $\left\{  \lambda R_{\theta_{1}}^{s}x:\lambda\in\mathbb{R},1\leq s\leq
q\right\}  $ is the set of all complex numbers whose argument belongs to
$\left\{  \alpha+jp2\pi/q:1\leq j\leq q\right\}  +\pi\mathbb{Z}$. Choose
numbers $\beta$ and $\gamma$ with $\alpha<\beta<\gamma<\alpha+\pi/8$ such
that
\[
\left[  \left\{  \gamma+jp2\pi/q:1\leq j\leq q\right\}  +\pi\mathbb{Z}\right]
\cap\left[  \left\{  \beta+jp2\pi/q:1\leq j\leq q\right\}  +\pi\mathbb{Z}%
\right]  =\varnothing.
\]
Since the argument of $e^{i\alpha}+te^{i\gamma}$ ranges over $\left(
\alpha,\gamma\right)  $ as $t$ ranges over $\left(  0,\infty\right)  $, we can
chose $t>0$ so that the argument of $e^{i\alpha}+te^{i\gamma}$ is $\beta$. Now
let $W=\left(
\begin{array}
[c]{c}%
0\\
0\\
\vdots\\
te^{i\gamma}%
\end{array}
\right)  $ in the domain of $J_{m}\left(  R_{\theta_{1}}\right)  $. Then
$S\left(  X+W\right)  =SX+SW=SW.$ However, the nonzero coordinates of any
vector in the closure of $\mathbb{R}$-\textrm{Orb}$\left(  T\right)  \left(
X+W\right)  $ are all complex numbers with arguments in $\left\{
\gamma+jp2\pi/q:1\leq j\leq q\right\}  +\pi\mathbb{Z}$ and the nonzero
coordinates of any vector in the closure of $\mathbb{R}$-\textrm{Orb}$\left(
T\right)  \left(  X+W\right)  $ are all complex numbers with arguments in
$\left\{  \beta+jp2\pi/q:1\leq j\leq q\right\}  +\pi\mathbb{Z}$ Hence
$S_{1}\left(
\begin{array}
[c]{c}%
0\\
0\\
\vdots\\
y
\end{array}
\right)  =0$ for every choice of $y.$ We can apply similar arguments to each
of the other coordinates to get $S_{1}=0$, which implies $S=0.$

\textbf{Case 2}. $S\left(  X\right)  =S_{1`}\left(  X\right)  =\lambda
_{0}T^{n_{0}}\left(  X\right)  \neq0.$ Note that if $\lambda T^{s}\left(
X\right)  =\left(
\begin{array}
[c]{c}%
x_{1}\\
x_{2}\\
\vdots\\
x_{m}%
\end{array}
\right)  \neq0$, then%
\[
\frac{\left\Vert x_{m-1}\right\Vert }{\left\Vert x_{m}\right\Vert }=s,\text{
and }R_{\theta_{1}}^{-s}x_{m}=\lambda x.
\]
This means that if $\left\{  m_{n}\right\}  $ is a sequence of nonnegative
integers and $\left\{  \lambda_{n}\right\}  $ is a sequence in $\mathbb{R}$,
and $T^{m_{n}}\left(  X\right)  \rightarrow S\left(  X\right)  ,$ then,
eventually $m_{n}=n_{0}$ and $\lambda_{n}\rightarrow\lambda_{0}$. It follows
that $S=\lambda_{0}T^{n_{0}}$ on the orthogonal complement of the domain of
$S_{1}$. If $k\geq2,$ we can argue (using $S_{2}$) that $S=\lambda_{0}%
T^{n_{0}}.$ If $k=1,$ we can use $M_{1},M_{2},M_{3}$ as in Case 1 to show that
$S=\lambda_{0}T^{n_{0}}.$

\textbf{Case 3}. $S\left(  X\right)  =S_{1}\left(  X\right)  =\left(
\begin{array}
[c]{c}%
x_{1}\\
x_{2}\\
\vdots\\
x_{m}%
\end{array}
\right)  \neq0,$ but $x_{m}=0.$ If $\left\{  s_{n}\right\}  $ is a sequence of
nonnegative integers and $\left\{  \lambda_{n}\right\}  $ is a sequence in
$\mathbb{R}$ and $\lambda_{n}T^{s_{n}}\left(  X\right)  \rightarrow S\left(
X\right)  ,$ we must have $\lambda_{n}\rightarrow0,$ and thus $s_{n}%
\rightarrow\infty,$ and $\left\{  \left\vert \lambda_{n}\right\vert
\binom{s_{n}}{m-1}\right\}  $ bounded. Thus $S_{1}\left(  X\right)  =\left(
\begin{array}
[c]{c}%
x_{1}\\
0\\
\vdots\\
0
\end{array}
\right)  $. It follows that if $J$ is an $m\times m$ Jordan block of $T$ with
$r\left(  J\right)  =1$ and whose domain is orthogonal to the domain of
$S_{1}$, then the restriction of $S$ to the domain of $J$ is a matrix whose
only nonzero entry is in the first row and $m^{th}$ column. The restriction of
$S$ to the domain of a block $J$ with $r\left(  J\right)  <1$ or whose size is
smaller than $m\times m$ must be $0$. If $k\geq2$, the $S_{1}$ also has an
operator matrix whose only nonzero entry is in the first row and $m^{th}$
column. If $k=1,$ then $\theta_{1}/2\pi$ is rational, and we can argue with
$M_{1},M_{2},M_{3}$ as in Case 1 to see that $S_{1}$ has a matrix whose only
nonzero entry is in the first row and $m^{th}$ column. If the $m\times m$
Jordan blocks of $T$ are $J_{m}\left(  R_{\theta_{1}}\right)  \oplus
\cdots\oplus J_{m}\left(  R_{\theta_{k}}\right)  \oplus J_{m}\left(
I_{a}\right)  \oplus J_{m}\left(  -I_{b}\right)  $ ($I_{a}$ is an $a\times a$
identity matrix), then the corresponding decomposition of $S$ is a direct sum
of $\left(
\begin{array}
[c]{cccc}%
0 & \cdots & 0 & A_{j}\\
0 & 0 & \cdots & \vdots\\
\vdots & \vdots & \ddots & 0\\
0 & 0 & \cdots & 0
\end{array}
\right)  $, $1\leq j\leq k+2$. It is easily seen that $A_{1}\oplus\cdots\oplus
A_{k+2}$ is in $\mathbb{R}$-\textrm{OrbRef}$\left(  R_{\theta_{1}}\oplus
\cdots\oplus R_{\theta_{k}}\oplus I_{a}\oplus-I_{b}\right)  .$ Since
$\theta_{1},\ldots,\theta_{k}$ satisfy condition $\left(  3\right)  $ in Lemma
\ref{hard}, it follows from Lemma \ref{hard} that $R_{\theta_{1}}\oplus
\cdots\oplus R_{\theta_{k}}\oplus I_{a}\oplus-I_{b}$ is $\mathbb{R}$-orbit
reflexive, so there is a sequence $\left\{  s_{n}\right\}  $ with
$s_{n}\rightarrow\infty$ and a sequence $\left\{  \lambda_{n}\right\}  $ in
$\mathbb{R}$ such that $\lambda_{n}\left(  R_{\theta_{1}}\oplus\cdots\oplus
R_{\theta_{k}}\oplus I_{a}\oplus-I_{b}\right)  ^{s_{n}-m+1}\rightarrow
A_{1}\oplus\cdots\oplus A_{k+2}$. Hence%
\[
\lambda_{n}T^{s_{n}}\rightarrow S.
\]
Hence $T$ is $\mathbb{R}$-orbit reflexive.
\end{proof}

\bigskip

\begin{remark}
Using the ideas of the proof of Corollary \ref{coror} it is possible to
characterize $\mathbb{R}$-orbit reflexivity for an algebraic operator on a
Banach space in terms of its algebraic Jordan form.
\end{remark}

\end{document}